\newenvironment{proof}{{\noindent\textbf{\textit {Proof.}}}}{\hfill $\blacksquare$\par}
\newtheorem{theorem}{Theorem}[section]
\newtheorem{proposition}[theorem]{\rm\bfseries Proposition}
\newtheorem{lemma}[theorem]{Lemma}
\newtheorem{corollary}[theorem]{\rm\bfseries Corollary}
\newtheorem{remark}[theorem]{Remark}
\def\NAT@def@citea{\def\@citea{\NAT@separator}}
\begin{document}
\vspace*{10mm}

\noindent
{\Large \bf Spectral conditions for spanning $k$-trees or $k$-ended-trees of $t$-connected graphs}

\vspace*{7mm}

\noindent
{\large \bf  Jifu Lin$^1$, Zenan Du$^{2,*}$, Xinghui Zhao$^1$, Lihua You$^1$}
\noindent

\vspace{7mm}

\noindent
$^1$ School of Mathematical Sciences, South China Normal University,  Guangzhou, 510631, P. R. China,
e-mail: {\tt 2023021893@m.scnu.edu.cn (J.F. Lin)},\\ {\tt 2022021990@m.scnu.edu.cn (X.H. Zhao)}, {\tt ylhua@scnu.edu.cn (L.H. You)}.\\[2mm]
$^2$ School of Mathematics and Statistics, Shanxi University, Shanxi, 030006, P. R. China, e-mail: {\tt duzn@sxu.edu.cn (Z.N. Du)}.\\[2mm]
$^*$ Corresponding author
\noindent

\vspace{7mm}

\noindent
{\bf Abstract} \
\noindent
Let $G$ be a connected graph of order $n$. A spanning $k$-tree of $G$ is a spanning tree with the maximum degree at most $k$, and a spanning $k$-ended-tree of $G$ is a spanning tree at most $k$ leaves, where $k\geq2$ is an integer. This paper establishes some spectral conditions for the existence of spanning $k$-trees or spanning $k$-ended-trees in $t$-connected graphs, which generalize the results of Fan et al. (2022) and Zhou (2010), and improve the results of Fiedler et al. (2010), Ao et al. (2023) and Ao et al. (2025).
 \\[2mm]

\noindent
{\bf Keywords:} Spanning tree; $k$-tree; $k$-ended-tree; $t$-connected; Spectral radius
\baselineskip=0.30in

\section{Introduction}

\hspace{1.5em}Throughout this paper, we only consider simple and undirected graphs. 

Let $G=(V(G),E(G))$ be a graph with vertex set $V(G)=\{v_1,v_2,\cdots,v_n\}$ and edge set $E(G)$, where $|V(G)|=n$ and $|E(G)|=e(G)$ are the \emph{order} and \emph{size} of $G$, respectively. For $v\in V(G)$, the degree of $v$ in $G$, denoted by $d_G(v)$(or $d(v)$ for short), is the number of vertices adjacent to $v$ in $G$. The minimum degree of $G$ is denoted by $\delta(G)$. For a subset $S \subseteq V(G)$, we denote by $G[S]$ the subgraph of $G$ induced by $S$, and by $G-S$ the subgraph obtained from $G$ by removing the vertices in $S$ and their incident edges. We use $K_n$ and $R(n,t)$ to denote the complete graph and a $t$-regular graph of order $n$, respectively. The \emph{complement} of a graph $G$ is denoted by $\overline{G}$.

Let $G_1$ and $G_2$ be vertex disjoint graphs. The \emph{union} $G_1\cup G_2$ is the graph with vertex set $V(G_1)\cup V(G_2)$ and edge set $E(G_1)\cup E(G_2)$. For any positive integer $t$, let $tG$ be the disjoint union of $t$ copies of $G$. The \emph{join} $G_1 \vee G_2$ is obtained from $G_1 \cup G_2$ by joining each vertex of $G_1$ to each vertex of $G_2$. Other undefined notations can be found in \cite{JAB}.

Let $A(G)=[a_{ij}]$ be the $n\times n$ \emph{adjacency matrix} of $G$ for which $a_{ij}=1$ if $v_i$ and $v_j$ are adjacent in $G$ and $a_{ij}=0$ otherwise. Let $D(G)$ be the diagonal matrix of vertex degrees in $G$. The matrix $Q(G)=D(G)+A(G)$ is called the \emph{signless Laplacian matrix} of $G$. The largest eigenvalue of $A(G)$ (resp. $Q(G)$), denoted by $\rho(G)$ (resp. $q(G)$), is called the adjacency spectral radius (resp. signless Laplacian spectral radius) of $G$.

A \emph{spanning tree} $T$ of a connected graph $G$ is a connected spanning subgraph of $G$ with $V(T)=V(G)$ and $|E(T)|=|V(T)|-1$. For an integer $k\geq 2$, a $k$-$tree$ is a tree with the maximum degree at most $k$, and a $k$-ended-tree is a tree with at most $k$ leaves. In particular, a spanning $2$-tree or spanning $2$-ended-tree is a Hamilton path of $G$. Hence a spanning $k$-tree or spanning $k$-ended-tree of $G$ is a natural generalization of a Hamilton path. A connected graph $G$ is called $t$-\emph{connected} if it has more than $t$ vertices and remains connected whenever fewer than $t$ vertices are removed.

In graph theory, determining the existence of spanning trees with specific structural properties is a fundamental problem. A famous example is the Hamilton path problem, which aims to verify whether a graph contains a spanning $2$-tree or $2$-ended-tree. In the past decades, the study on the existence of spanning $k$-trees or $k$-ended-trees attracted much attention. For example, Win \cite{S1} made a connection between the existence of spanning $k$-trees in a graph and its toughness, and provided a Chv\'{a}tal-Erd\H{o}s type condition to ensure that a $t$-connected graph contains a spanning $k$-ended tree \cite{S2}; Ellingham and Zha \cite{ME} presented a short proof to Win's theorem; Gu and Liu \cite{XA} characterized a connected graph with a spanning $k$-tree by applying Laplacian eigenvalues; Zhou, Zhang and Liu \cite{SZ} investigated the relations between the spanning $k$-tree and distance signless Laplacian spectral radius; Ozeki and Yamashita \cite{KO} proved that it is an $\mathcal{NP}$-complete problem to decide whether a given connected graph admits a spanning $k$-tree; Ao, Liu and Yuan \cite{GYA1} proved some spectral conditions to ensure the existence of a spanning $k$-ended-tree in connected graphs; Zheng, Huang and Wang \cite{JZ} obtained a adjacent spectral condition for spanning $k$-ended-trees in $t$-connected graphs.

In this paper, we study the existence of spanning $k$-trees and spanning $k$-ended-trees in $t$-connected graphs, obtain some spectral conditions as shown in Sections \ref{sec3} and Section \ref{sec4}, respectively, which generalize the results of Fan et al. \cite{DT} and Zhou \cite{BZ}, and improve the results of Fiedler et al. \cite{MF} and Ao et al. \cite{GYA,GYA1}. 

\section{Preliminaries}\label{sec-pre}

\hspace{1.5em}In this section, we introduce some preliminary lemmas, which are useful in the following proofs.

\begin{lemma}{\rm(\!\!\cite{ylh})}\label{rq1}
	Let $G$ be a graph and $F$ be a spanning subgraph of $G$. Then $\rho(F)\leq \rho(G)$ and $q(F)\leq q(G)$. In particular, if $G$ is connected and $F$ is a proper subgraph of $G$, then $\rho(F)<\rho(G)$ and $q(F)<q(G)$.
\end{lemma}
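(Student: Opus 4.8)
The plan is to combine the Rayleigh (variational) characterization of the largest eigenvalue of a symmetric matrix with the Perron--Frobenius theorem. Since $F$ is a \emph{spanning} subgraph of $G$, the two graphs share the vertex set and $E(F)\subseteq E(G)$, so $d_F(v)\le d_G(v)$ for every $v\in V(G)$; hence both $A(G)-A(F)$ and $Q(G)-Q(F)=(D(G)-D(F))+(A(G)-A(F))$ are entrywise nonnegative symmetric matrices. I will treat $\rho$ and $q$ in parallel by writing $M(\cdot)$ for $A(\cdot)$ or $Q(\cdot)$ and $\lambda(\cdot)$ for $\rho(\cdot)$ or $q(\cdot)$; the only structural facts needed are the edge expansions $x^{T}A(H)x=\sum_{v_iv_j\in E(H)}2x_ix_j$ and $x^{T}Q(H)x=\sum_{v_iv_j\in E(H)}(x_i+x_j)^2$, which give, for every $x$,
\[
x^{T}M(G)x=x^{T}M(F)x+c(x),\qquad c(x)=\!\!\sum_{v_iv_j\in E(G)\setminus E(F)}\!\! g(x_i,x_j)\ \ge\ 0,
\]
where $g(a,b)=2ab$ in the adjacency case and $g(a,b)=(a+b)^2$ in the signless Laplacian case.

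For the non-strict inequalities, note that $M(F)$ is a nonnegative symmetric matrix, so it has a unit eigenvector $y\ge 0$ with $M(F)y=\lambda(F)y$. Then, using $c(y)\ge 0$ and the Rayleigh characterization of $\lambda(G)$,
\[
\lambda(F)=y^{T}M(F)y\ \le\ y^{T}M(F)y+c(y)=y^{T}M(G)y\ \le\ \max_{\|x\|=1}x^{T}M(G)x=\lambda(G),
\]
which proves $\rho(F)\le\rho(G)$ and $q(F)\le q(G)$.

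For the strict inequalities, assume in addition that $G$ is connected and that $F$ is a proper subgraph, so $E(G)\setminus E(F)\ne\emptyset$. Suppose for contradiction that $\lambda(F)=\lambda(G)$. Then the displayed chain collapses to equalities; in particular $y^{T}M(G)y=\lambda(G)=\max_{\|x\|=1}x^{T}M(G)x$, so $y$ is a nonnegative eigenvector of $M(G)$ for its largest eigenvalue. Because $G$ is connected, $M(G)$ is irreducible, and Perron--Frobenius forces this eigenvector to be strictly positive: $y>0$. On the other hand, equality also forces $c(y)=0$, i.e.\ $g(y_i,y_j)=0$ for every $v_iv_j\in E(G)\setminus E(F)$, which is impossible once $y>0$ and that edge set is nonempty. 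Hence $\rho(F)<\rho(G)$ and $q(F)<q(G)$.

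The single delicate point --- and the only place connectedness is used --- is guaranteeing that the extremal vector is strictly positive. If $F$ (and possibly $G$) were allowed to be disconnected, the Perron vector of $M(F)$ could be supported on one component, and the chain of (in)equalities would no longer be tight enough to force a contradiction; the role of irreducibility of $M(G)$ is precisely to rule this out. Everything else is elementary linear algebra, so I expect the write-up to amount to little more than the three displays above.
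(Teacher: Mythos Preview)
Your argument is correct. The paper does not prove this lemma at all; it is quoted from \cite{ylh} as a preliminary result, so there is no in-paper proof to compare against. Your Rayleigh--quotient/Perron--Frobenius argument is the standard one and handles both $\rho$ and $q$ cleanly; the one point worth being explicit about (which you do address) is that the existence of a \emph{nonnegative} unit eigenvector for $\lambda(F)$ only needs the weak form of Perron--Frobenius for nonnegative matrices, while strict positivity of the extremal vector in the equality case uses irreducibility of $M(G)$, available precisely because $G$ is connected.
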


\begin{lemma}{\rm(\!\!\cite{YH, VNS})}\label{r1}
	Let $G$ be a graph of order $n$ with the minimum degree $\delta(G)\geq t$. Then $$\rho(G)\leq\frac{t-1+\sqrt{(t+1)^2+4(2e(G)-nt)}}{2}.$$
\end{lemma}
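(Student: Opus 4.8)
The plan is to squeeze out of the eigenequation for $A(G)$ a single quadratic inequality, namely
\[
\rho^2 - (t-1)\rho \;\le\; 2e(G) - (n-1)t ,
\]
where $\rho := \rho(G)$. The larger root of $\lambda^2 - (t-1)\lambda - \bigl(2e(G)-(n-1)t\bigr) = 0$ is precisely the quantity in the statement, since $(t-1)^2 + 4\bigl(2e(G)-(n-1)t\bigr) = (t+1)^2 + 4\bigl(2e(G)-nt\bigr)$; so once the displayed inequality is in hand, solving the quadratic finishes everything.

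To obtain it, let $x \ge 0$ be a Perron eigenvector of the nonnegative symmetric matrix $A(G)$ for $\rho$, scaled so that its largest entry is $1$, say $x_u = 1$ and $x_w \le 1$ for all $w$. Reading $A(G)x = \rho x$ off at the coordinate $u$ gives $\rho = \rho x_u = \sum_{w\sim u} x_w$, and reading $A(G)^2 x = \rho^2 x$ off at $u$ gives $\rho^2 = \rho^2 x_u = \sum_{w\sim u}\sum_{z\sim w} x_z$. Subtracting $t-1$ times the first identity from the second and using $x_z \le 1$ (so $\sum_{z\sim w} x_z \le d_G(w)$) yields
\[
\rho^2 - (t-1)\rho \;=\; \sum_{w\sim u}\Bigl(\sum_{z\sim w} x_z - (t-1)\Bigr) \;\le\; \sum_{w\sim u}\bigl(d_G(w) - (t-1)\bigr).
\]

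The remaining ingredient is a handshake count for $\sum_{w\sim u} d_G(w)$: writing $d := d_G(u)$ and splitting $V(G)$ into $\{u\}$, $N_G(u)$, and the other $n-1-d$ vertices (each of degree $\ge t$ by $\delta(G)\ge t$), one gets $\sum_{w\sim u} d_G(w) = 2e(G) - d - \sum_{z\in V(G)\setminus(N_G(u)\cup\{u\})} d_G(z) \le 2e(G) - d - (n-1-d)t$. Substituting this into the previous display, the terms in $d$ cancel and leave exactly $\rho^2 - (t-1)\rho \le 2e(G) - (n-1)t$, as wanted. (If $G$ is disconnected one may simply run the same argument with $u$ in a component attaining $\rho$, the two eigenequations being unaffected.)

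The one place that needs care — and the only genuine "idea" in the argument — is the choice of the multiplier $t-1$ when combining the two eigenequations: it is tuned so that the coefficient of $d_G(u)$ disappears in the last step. If one instead kept $\rho^2$ alone one would recover only Hong's bound $\rho \le \sqrt{2e(G)-n+1}$, with no $\delta(G)$; and a careless reinsertion of $\delta(G)$ leaves a leftover $+\,d_G(u)(t-1)$, which points the wrong way because the only available control is $\rho \le d_G(u)$, not the reverse. With the right multiplier, no separate hypothesis on $d_G(u)$ is needed and the rest is the handshake identity plus routine algebra on the quadratic.
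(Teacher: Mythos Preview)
The paper does not prove this lemma; it is quoted from \cite{YH, VNS}. So there is no ``paper's proof'' to compare against, only the correctness of your argument to assess.

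There is a genuine gap. From $\rho=\sum_{w\sim u}x_w$ you get $(t-1)\rho=\sum_{w\sim u}(t-1)x_w$, hence
\[
\rho^2-(t-1)\rho \;=\; \sum_{w\sim u}\Bigl(\sum_{z\sim w}x_z-(t-1)\,x_w\Bigr),
\]
\emph{not} $\sum_{w\sim u}\bigl(\sum_{z\sim w}x_z-(t-1)\bigr)$ as you wrote. With the correct expression, the step ``$\le\sum_{w\sim u}(d_G(w)-(t-1))$'' would require $\sum_{z\sim w}(1-x_z)\ge(t-1)(1-x_w)$ for every $w\sim u$, and this is false in general. Take $G=K_{2,4}$ with $t=\delta(G)=2$: then $\rho=2\sqrt2$, the maximising vertex $u$ lies on the side of size~$2$, each $w\sim u$ has $d_G(w)=2$, so $\sum_{w\sim u}(d_G(w)-(t-1))=4$, while $\rho^2-(t-1)\rho=8-2\sqrt2\approx 5.17>4$. (The lemma itself is fine for $K_{2,4}$, since $2e(G)-(n-1)t=6\ge 8-2\sqrt2$; it is only your intermediate inequality that breaks.)

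Put differently, silently replacing $(t-1)x_w$ by $(t-1)$ amounts to swapping $(t-1)\rho$ for $(t-1)d_G(u)$ on the left-hand side --- which is precisely the ``leftover $+\,d_G(u)(t-1)$ pointing the wrong way'' that you yourself warned against in your last paragraph. A purely local argument at $u$ using only $x_z\le 1$ cannot close this gap; the published proofs bring in a second piece of information (Nikiforov, for instance, works simultaneously with the largest and the second-largest eigenvector entries).
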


\begin{lemma}{\rm(\!\!\cite{MH})}\label{r2}
	Let $G$ be a graph of order $n$. Then $\sum\limits_{v\in V(G)}d(v)^2\leq n\rho(G)^2$.
\end{lemma}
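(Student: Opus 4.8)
The plan is to express $\sum_{v\in V(G)}d(v)^2$ as a quadratic form in the adjacency matrix and then control it via the spectral radius. Write $A=A(G)$ and let $\mathbf{j}=(1,\ldots,1)^{\top}\in\mathbb{R}^{n}$. Since the $v$-th coordinate of $A\mathbf{j}$ equals $d(v)$, we have $A\mathbf{j}=(d(v_1),\ldots,d(v_n))^{\top}$, and hence
\[
\sum_{v\in V(G)}d(v)^2=\|A\mathbf{j}\|^{2}=(A\mathbf{j})^{\top}(A\mathbf{j})=\mathbf{j}^{\top}A^{2}\mathbf{j}.
\]
(Equivalently, $\mathbf{j}^{\top}A^{2}\mathbf{j}$ counts the walks of length $2$ in $G$, and grouping such a walk by its middle vertex $w$ yields the count $\sum_{w}d(w)^{2}$.)

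The next step is a Rayleigh quotient estimate for the symmetric matrix $A^{2}$. By the spectral theorem, $A$ admits an orthonormal eigenbasis $\mathbf{z}_1,\ldots,\mathbf{z}_n$ with eigenvalues $\lambda_1\ge\cdots\ge\lambda_n$, so $A^{2}$ has the same eigenvectors with eigenvalues $\lambda_1^{2},\ldots,\lambda_n^{2}$. Because $A$ is entrywise nonnegative, the Perron--Frobenius theorem gives $\rho(G)=\lambda_1\ge|\lambda_i|$ for every $i$, hence $\lambda_i^{2}\le\rho(G)^{2}$ for every $i$. Writing $\mathbf{j}=\sum_{i}c_i\mathbf{z}_i$ with $\sum_{i}c_i^{2}=\|\mathbf{j}\|^{2}=n$, we obtain
\[
\mathbf{j}^{\top}A^{2}\mathbf{j}=\sum_{i=1}^{n}c_i^{2}\lambda_i^{2}\le\rho(G)^{2}\sum_{i=1}^{n}c_i^{2}=n\,\rho(G)^{2},
\]
and combining this with the previous display finishes the proof. (One may rephrase the middle step as: $\rho(G)^{2}I-A^{2}$ is positive semidefinite, so $\mathbf{j}^{\top}\big(\rho(G)^{2}I-A^{2}\big)\mathbf{j}\ge0$.)

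There is no real obstacle here; the only point that deserves a word of justification is the inequality $|\lambda_i|\le\rho(G)$, i.e.\ that the largest eigenvalue of $A$ dominates the modulus of every eigenvalue. For a connected graph this is precisely Perron--Frobenius; in general one applies it to each connected component, or simply invokes the standard fact that a symmetric nonnegative matrix satisfies $\rho(A)=\max_i|\lambda_i|$. If desired, the same computation also pins down the equality case: $\sum_{v}d(v)^{2}=n\rho(G)^{2}$ holds exactly when $\mathbf{j}$ lies in the span of the eigenspaces of $A$ for the eigenvalues $\pm\rho(G)$ — but this refinement is not needed for the stated lemma.
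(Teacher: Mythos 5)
Your proof is correct. The paper states this lemma without proof, citing Hofmeister (1988); your argument --- writing $\sum_{v}d(v)^{2}=\mathbf{j}^{\top}A^{2}\mathbf{j}$ and bounding the Rayleigh quotient of $A^{2}$, with the needed observation that $\rho(G)=\max_i|\lambda_i|$ for an adjacency matrix --- is precisely the standard proof of that cited result.
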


\begin{lemma}{\rm(\!\!\cite{BL})}\label{r3}
	Let $G$ be a graph with non-empty edge set. Then $$\rho(G)\geq\min\{\sqrt{d(u)d(v)}: uv\in E(G)\}.$$ Moreover, if $G$ is connected then equality holds if and only if $G$ is regular or semi-regular. 
\end{lemma}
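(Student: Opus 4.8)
The plan is to estimate $\rho(G)$ from below by feeding a well-chosen test vector into the Rayleigh quotient of $A(G)$. Set $m=\min\{\sqrt{d(u)d(v)}:uv\in E(G)\}$ (well defined since $E(G)\neq\emptyset$) and take $x=(\sqrt{d(v)})_{v\in V(G)}$. Then $x^{T}A(G)x=2\sum_{uv\in E(G)}\sqrt{d(u)d(v)}\geq 2m\,e(G)$, while $\|x\|^{2}=\sum_{v\in V(G)}d(v)=2e(G)$; since $A(G)$ is symmetric, $\rho(G)=\max_{y\neq 0}\frac{y^{T}A(G)y}{\|y\|^{2}}\geq\frac{x^{T}A(G)x}{\|x\|^{2}}\geq m$, which is the claimed inequality.

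For the equality assertion (with $G$ connected), the "if" direction is a direct computation: if $G$ is $d$-regular then $\rho(G)=d=\sqrt{d\cdot d}=m$, and if $G$ is bipartite with parts having degrees $a$ and $b$, then a two-valued Perron vector yields $\rho(G)=\sqrt{ab}$, while every edge joins the two parts so $m=\sqrt{ab}$ as well. For the "only if" direction, suppose $\rho(G)=m$. Then the chain of inequalities above collapses to equalities. From $\rho(G)\|x\|^{2}=x^{T}A(G)x$ and the Rayleigh characterization, $x$ is a $\rho(G)$-eigenvector; since $G$ is connected, $A(G)$ is irreducible, and as $x$ is entrywise positive, the Perron–Frobenius theorem identifies $x$ (up to scaling) with the Perron vector. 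From $\sum_{uv\in E(G)}\sqrt{d(u)d(v)}=m\,e(G)$ together with each summand being $\geq m$, we obtain $d(u)d(v)=m^{2}$ for every edge $uv$.

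It then remains to read off the structure of $G$ from the relation $d(u)d(v)=m^{2}$ on every edge plus connectedness. Fixing a vertex $v_{0}$ of degree $c$, all its neighbors have degree $m^{2}/c$, all their neighbors degree $c$, and so on; propagating along paths and using connectedness, the degree set of $G$ is contained in $\{c,m^{2}/c\}$. If $c=m^{2}/c$, i.e.\ $c=m$, then $G$ is $m$-regular. If $c\neq m^{2}/c$, then every edge joins a vertex of degree $c$ to one of degree $m^{2}/c$, so the two degree classes partition $V(G)$ into independent sets, forcing $G$ to be bipartite and semi-regular (an odd cycle would force $c=m^{2}/c$). I expect the Rayleigh-quotient bound to be entirely routine; the step needing genuine care is this last structural analysis in the equality case, specifically the twin points of invoking irreducibility/Perron–Frobenius to certify that $x=(\sqrt{d(v)})$ is actually an eigenvector of $A(G)$, and deducing bipartiteness cleanly from the constancy of $d(u)d(v)$ along edges.
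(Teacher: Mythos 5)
Your proof is correct and complete: the Rayleigh-quotient bound with the test vector $x=(\sqrt{d(v)})_{v\in V(G)}$ gives the inequality, and the equality analysis (forcing $d(u)d(v)=m^2$ on every edge and then propagating degrees along paths in a connected graph) correctly yields regularity or bipartite semi-regularity. The paper itself does not prove this lemma but merely cites it from the literature, and your argument is the standard one for this bound; as you note, the Perron--Frobenius identification of $x$ as the Perron vector is not actually needed for the structural conclusion, which follows already from $d(u)d(v)=m^2$ on all edges together with connectedness.
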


\begin{lemma}{\rm(\!\!\cite{KD,LF})}\label{q1}
	Let $G$ be a connected graph of order $n$ and size $e(G)$. Then $$q(G)\leq\frac{2e(G)}{n-1}+n-2.$$
\end{lemma}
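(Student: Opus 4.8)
This is a known estimate (cf.\ \cite{KD,LF}); I sketch how one would establish it. First I would reduce the problem. If $G$ is bipartite with parts $(X,Y)$, then conjugating $Q(G)=D(G)+A(G)$ by the diagonal $\pm1$ matrix equal to $+1$ on $X$ and $-1$ on $Y$ sends $A(G)$ to $-A(G)$, so $Q(G)$ is similar to the Laplacian $L(G)$ and $q(G)=\mu_1(G)$; combining this with the classical bound $\mu_1(G)\le n$ and with $e(G)\ge n-1$ (connectedness) gives $q(G)=\mu_1(G)\le n\le\frac{2e(G)}{n-1}+n-2$, which disposes of the bipartite case (in particular of all trees). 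Hence one may assume $G$ non-bipartite. Let $x$ be the unit Perron eigenvector of $Q(G)$ (positive since $G$ is connected) and set $s=\mathbf 1^{T}x>0$.

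Now use the complement identity $Q(G)+Q(\overline G)=(n-2)I+J$: since $\|x\|=1$ it yields $q(G)=x^{T}Q(G)x=(n-2)+s^{2}-x^{T}Q(\overline G)x$, while $e(G)+e(\overline G)=\binom n2$ gives $\frac{2e(G)}{n-1}=n-\frac{2e(\overline G)}{n-1}$. So the assertion is equivalent to
\[
x^{T}Q(\overline G)x=\sum_{uv\in E(\overline G)}(x_u+x_v)^{2}\ \ge\ s^{2}-n+\frac{2e(\overline G)}{n-1}.
\]
I would bound the left side below by convexity, $\sum_{uv\in E(\overline G)}(x_u+x_v)^{2}\ge\frac1{e(\overline G)}\bigl(\sum_{uv\in E(\overline G)}(x_u+x_v)\bigr)^{2}=\frac1{e(\overline G)}\bigl(\sum_{v}d_{\overline G}(v)x_v\bigr)^{2}$ (the degenerate case $e(\overline G)=0$, i.e.\ $G=K_n$, being immediate), and then eliminate the remaining sum via the eigen-equation $Q(G)x=q(G)x$: summing its coordinates gives $\sum_v d(v)x_v=\tfrac12 q(G)\,s$, hence $\sum_v d_{\overline G}(v)x_v=\bigl(n-1-\tfrac12q(G)\bigr)s$. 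Together with the Cauchy--Schwarz bound $s^{2}\le n$, this collapses the problem to a single inequality in $q(G)$, $s$ and $e(\overline G)$.

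The main obstacle is precisely that last step: the resulting estimate must hold with equality simultaneously at $G=K_n$ (where $x$ is uniform, $s^{2}=n$, $e(\overline G)=0$) and at $G=K_{1,n-1}$ (where $\overline G=K_{n-1}\cup K_1$ and the convexity bound is already tight but $s^{2}<n$), so the easy bounds $s^{2}\le n$ and $q(G)\le n-2+s^{2}$ are individually too wasteful, and a short case split — on the size of $s$, equivalently on how close $q(G)$ already is to the claimed value — appears to be needed. Finally, for the equality characterization $G\in\{K_n,K_{1,n-1}\}$ one traces the equality conditions back through the convexity step (which forces all sums $x_u+x_v$, $uv\in E(\overline G)$, to coincide, severely restricting $\overline G$) and through Cauchy--Schwarz, and combines them with the eigen-equation.
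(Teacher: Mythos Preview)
The paper does not prove this lemma; it is simply quoted from \cite{KD,LF} as a known result, so there is no in-paper argument to compare your sketch against.

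On its own terms, your sketch is incomplete in the place you yourself flag. The bipartite reduction is clean, and the complement identity $Q(G)+Q(\overline G)=(n-2)I+J$ together with the convexity (Cauchy--Schwarz) step are set up correctly. But after substituting $\sum_v d_{\overline G}(v)x_v=(n-1-\tfrac12 q(G))\,s$, your lower bound for $x^{T}Q(\overline G)x$ still contains $q(G)$, so what you obtain is an implicit constraint on $q(G)$ rather than the explicit upper bound you need. You say a case split on the size of $s$ ``appears to be needed'' and then stop; that is a genuine gap, not a cosmetic omission. The two equality cases $K_n$ and $K_{1,n-1}$ really do saturate different auxiliary inequalities ($s^2\le n$ versus the convexity step), so without actually carrying out that split --- or replacing it with some sharper single estimate --- there is no proof here. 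If you want a self-contained argument, either supply the missing case analysis in full, or follow the original proofs in \cite{KD,LF}, which proceed by a more direct eigenvector argument and avoid this circularity.
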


\begin{lemma}{\rm(\!\!\cite{BZ})}\label{q2}
	Let $G$ be a graph of size $e(G)$. Then $q(G)\geq \frac {z(G)}{e(G)},$ where $z(G)=\sum\limits_{u\in V(G)}d(u)^2$. Moreover, if $G$ is connected then equality holds if and only if $G$ is regular or semi-regular. 
\end{lemma}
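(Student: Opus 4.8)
The plan is to realise $q(G)$ through the incidence matrix of $G$, so that the bound drops out of a one‑line Rayleigh‑quotient estimate, and then to extract the equality case from the structure of the line graph. Let $B$ be the $n\times e(G)$ vertex--edge incidence matrix of $G$, so that $Q(G)=BB^{T}$ while $B^{T}B=A(L(G))+2I$, where $L(G)$ denotes the line graph of $G$. We may assume $E(G)\neq\emptyset$, since otherwise the inequality is vacuous. As $BB^{T}$ and $B^{T}B$ have the same nonzero eigenvalues and $q(G)=\lambda_{\max}(BB^{T})>0$ (the trace of $Q(G)$ is $2e(G)>0$), we get $q(G)=\lambda_{\max}(B^{T}B)=\rho(L(G))+2$.

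Next I would apply the Rayleigh principle to the positive semidefinite matrix $B^{T}B$ with the all‑ones test vector $\mathbf{1}\in\mathbb{R}^{e(G)}$ indexed by the edges of $G$:
\[
q(G)=\lambda_{\max}(B^{T}B)\ \geq\ \frac{\mathbf{1}^{T}B^{T}B\,\mathbf{1}}{\mathbf{1}^{T}\mathbf{1}}=\frac{\|B\mathbf{1}\|^{2}}{e(G)}.
\]
A direct computation gives $B\mathbf{1}=(d(v_{1}),\dots,d(v_{n}))^{T}$, hence $\|B\mathbf{1}\|^{2}=\sum_{v\in V(G)}d(v)^{2}=z(G)$, and the claimed inequality $q(G)\geq z(G)/e(G)$ follows. (Equivalently, since $q(G)=\rho(L(G))+2$ and $L(G)$ has $e(G)$ vertices and $\sum_{v}\binom{d(v)}{2}=\tfrac12(z(G)-2e(G))$ edges, this is just the elementary estimate $\rho(L(G))\geq\frac{2e(L(G))}{e(G)}=\frac{z(G)}{e(G)}-2$ by the average degree of $L(G)$; one could also avoid line graphs altogether and apply Rayleigh to $Q(G)$ with the vertex degree vector, using $x^{T}Q(G)x=\sum_{uv\in E(G)}(d(u)+d(v))^{2}$ together with Cauchy--Schwarz.)

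For the equality statement, assume $G$ is connected, so $L(G)$ is connected as well. Equality in the Rayleigh bound forces $\mathbf{1}$ to be an eigenvector of $B^{T}B$ for $\lambda_{\max}$, equivalently a Perron vector of $A(L(G))$; by Perron--Frobenius this positive vector is the Perron vector if and only if $L(G)$ is regular, i.e. $d(u)+d(v)$ is the same for every edge $uv\in E(G)$. The last step is to check that, in a connected graph, this constancy is equivalent to $G$ being regular or semi‑regular: walking along any path forces the degrees to alternate between two values $a$ and $b$; if $G$ contains an odd cycle this collapses to $a=b$ and $G$ is regular, and otherwise $G$ is bipartite with one colour class $a$‑regular and the other $b$‑regular, i.e. semi‑regular (and conversely both families clearly satisfy the condition). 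I expect this final equality analysis — getting the bipartite/non‑bipartite dichotomy right and disposing of the degenerate edgeless case — to be the only genuinely delicate point, as the inequality itself is immediate from the Rayleigh quotient.
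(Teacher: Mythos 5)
Your argument is correct. The paper does not prove this lemma but cites it from Zhou's paper, and your incidence-matrix/line-graph derivation (Rayleigh quotient of $B^{T}B$ at the all-ones edge vector, then Perron--Frobenius on the connected line graph to force $d(u)+d(v)$ constant over edges, hence $G$ regular or semi-regular) is exactly the standard proof of this bound and handles the equality case and the edgeless degeneracy correctly.
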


The $l$-$closure$ of a graph $G$, denoted by $C_l(G)$, is the graph obtained from $G$ recursively joining pairs of nonadjacent vertices whose degree sum is at least $l$ until no such pair remains.

\begin{lemma}{\rm(\!\!\cite{GYA})}\label{edge}
	Let $G$ be an $t$-connected graph of order $n\geq (7k-2)t+4$, where $t$ and $k$ are integers with $t\geq1$ and $k\geq2$. If $$e(G)\geq\binom{n-(k-1)t-2}{2}+(k-1)t^2+(k+1)t+3,$$ then $G$ has a spanning $k$-tree unless $C_{n-(k-2)t-1}(G)\cong K_t\vee(K_{n-kt-1}\cup (kt-t+1)K_1)$.
\end{lemma}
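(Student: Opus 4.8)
The plan is to combine a closure reduction with Win's classical characterisation of graphs having a spanning $k$-tree, and then to bound $e(G)$ from the component structure that Win's theorem produces. Two external ingredients are used. First, Win's theorem \cite{S1}: a connected graph $G$ has a spanning $k$-tree if and only if $c(G-S)\le(k-1)|S|+1$ for every $S\subseteq V(G)$ (writing $c(H)$ for the number of components of a graph $H$); equivalently, if $G$ has no spanning $k$-tree then some $S$ satisfies $c(G-S)\ge(k-1)|S|+2$. Second, the closure property for spanning $k$-trees of $t$-connected graphs at threshold $l=n-(k-2)t-1$: for non-adjacent $u,v$ with $d_G(u)+d_G(v)\ge l$, the graph $G$ has a spanning $k$-tree if and only if $G+uv$ does. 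By the second ingredient, $G$ has a spanning $k$-tree if and only if $C_l(G)$ does; since $e(C_l(G))\ge e(G)$ and $C_l(G)$ is still $t$-connected (edge additions cannot lower connectivity), it suffices to prove that a \emph{closed} graph $G=C_l(G)$ with $e(G)$ at least the stated bound either has a spanning $k$-tree or is isomorphic to $K_t\vee(K_{n-kt-1}\cup(kt-t+1)K_1)$. So assume $G$ is closed.

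Suppose $G$ has no spanning $k$-tree. By Win's theorem fix $S$ with $c(G-S)\ge(k-1)|S|+2$ and set $s:=|S|$; since $G-S$ is disconnected and $G$ is $t$-connected, $s\ge t$. Let $C_1,\dots,C_r$ be the components of $G-S$ with $n_i:=|C_i|$, so $\sum_i n_i=n-s$ and $r\ge(k-1)s+2$, whence $ks\le n-2$. There are no edges between distinct components, so
$$
e(G)\le\binom{s}{2}+s(n-s)+\sum_{i=1}^{r}\binom{n_i}{2}\le\binom{s}{2}+s(n-s)+\binom{n-ks-1}{2}=:f(s),
$$
the last inequality because $\sum_i\binom{n_i}{2}$ is largest when one component has order $n-s-(r-1)\le n-ks-1$ and the others are singletons. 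A short computation gives $f(s)=\tfrac{1}{2}\big[(k^2-1)s^2-(2kn-2n-3k+1)s+(n-1)(n-2)\big]$, a convex quadratic in $s$, so on $t\le s\le(n-2)/k$ its maximum is attained at an endpoint; and using $n\ge(7k-2)t+4$ one checks that both $f(t+1)$ and the value of $f$ at the upper endpoint lie strictly below $\binom{n-(k-1)t-2}{2}+(k-1)t^2+(k+1)t+3$. Hence $s\ge t+1$ already gives $e(G)<\binom{n-(k-1)t-2}{2}+(k-1)t^2+(k+1)t+3$, contradicting the hypothesis.

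It remains to treat $s=t$, which is where the real work lies. Here $r\ge(k-1)t+2$, and a direct calculation gives $f(t)=\binom{n-(k-1)t-1}{2}+(k-1)t^2+t$, which \emph{exceeds} the claimed bound by $n-(2k-1)t-5>0$; so the estimate $e(G)\le f(t)$ alone is not conclusive and one must use that $G$ is closed. Two consequences are exploited: (a) every singleton component $\{x\}$ of $G-S$ has $N_G(x)\subseteq S$ and $d_G(x)\ge\delta(G)\ge t=|S|$, so $x$ is joined to all of $S$; and (b) in the closed graph $G$ every non-adjacent pair of vertices has degree sum at most $n-(k-2)t-2$, which --- applied to pairs in distinct components and to pairs consisting of an $S$-vertex and an outside vertex --- rigidly restricts how many edges can occur inside and between the parts. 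Optimising $e(G)$ over the admissible component-size multisets $\{n_1,\dots,n_r\}$ subject to (a) and (b), one should find that $e(G)=f(t)$ is attained \emph{only} by $G\cong K_t\vee\big(K_{n-kt-1}\cup(kt-t+1)K_1\big)$ (which indeed one verifies is closed), while every other closed $t$-connected graph with $c(G-S)\ge(k-1)t+2$ loses enough edges to satisfy $e(G)<\binom{n-(k-1)t-2}{2}+(k-1)t^2+(k+1)t+3$. Undoing the closure reduction --- apply the above to $C_l(G)$ --- yields either a spanning $k$-tree of $C_l(G)$, hence of $G$, or $C_{n-(k-2)t-1}(G)\cong K_t\vee(K_{n-kt-1}\cup(kt-t+1)K_1)$, as asserted.

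The main obstacle is precisely this last step. Because $f(t)$ overshoots the target, the closure inequality ``every non-adjacent pair has degree sum $<n-(k-2)t-1$'' has to be used in a fairly delicate way, via a case analysis on the component-size multiset of $G-S$ and on exactly which edges inside the components and between $S$ and the components are present, each case also using $\delta(G)\ge t$ from $t$-connectivity, to pin down the exceptional graph as the unique extremal closed configuration and to bound the edge deficit of every other one. By contrast, the closure reduction, the application of Win's theorem, and the convex-quadratic estimate handling $s\ge t+1$ are routine.
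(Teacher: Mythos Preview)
This lemma is quoted from \cite{GYA} and is not proved in the present paper, so there is no in-paper argument to compare against. Your overall plan---pass to the closure $C_{n-(k-2)t-1}(G)$ via the Kano--Kishimoto theorem, apply Win's toughness criterion to obtain a cutset $S$ with $c(G-S)\ge (k-1)|S|+2$, bound $e(G)$ by the convex quadratic $f(s)$, and dispose of $s\ge t+1$ by convexity---is the standard route and matches what one expects the source \cite{GYA} to do.

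The genuine gap is exactly the one you flag yourself: the case $s=t$. You correctly compute that $f(t)$ exceeds the target bound by $n-(2k-1)t-5>0$, so the naive edge count is not enough, and you then write that by ``optimising over the admissible component-size multisets subject to (a) and (b) \dots\ one should find'' the extremal graph, adding that ``the main obstacle is precisely this last step'' and that it ``has to be used in a fairly delicate way, via a case analysis''. None of that case analysis is actually carried out. As written, the proposal is a proof \emph{outline} that stops at the point where the real combinatorial work begins; the phrases ``one should find'' and ``should be used'' are placeholders, not arguments. To complete the proof you must, for $s=t$, exploit closedness (every non-edge has degree sum $\le n-(k-2)t-2$) together with $\delta(G)\ge t$ to show that either $r=(k-1)t+2$ with exactly one non-singleton component inducing a clique and $S$ complete to everything---forcing $G\cong K_t\vee(K_{n-kt-1}\cup(kt-t+1)K_1)$---or else at least $n-(2k-1)t-4$ edges are lost relative to $f(t)$. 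Until that step is written out, the proposal does not constitute a proof.
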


\begin{lemma}{\rm(\!\!\cite{JZ})}\label{edge2}
	Let $G$ be an $t$-connected graph of order $n\geq \max\{6k+6t-1,k^2+kt+t+1\}$, where $t$ and $k$ are integers with $t\geq1$ and $k\geq2$. If $$e(G)\geq\binom{n-k-t}{2}+(k+t-1)^2+k+t,$$ then $G$ has a spanning $k$-ended-tree unless $C_{n-1}(G)\cong K_t\vee(K_{n-k-2t+1}\cup (k+t-1)K_1)$.
\end{lemma}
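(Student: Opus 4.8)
\noindent
The plan is to argue by contradiction: suppose $G$ is $t$-connected of the stated order, $e(G)$ is at least the stated bound, $G$ has no spanning $k$-ended-tree, and $C_{n-1}(G)\not\cong H$, where $H:=K_t\vee(K_{n-k-2t+1}\cup(k+t-1)K_1)$. First I would pass to the closure: a stability property of spanning $k$-ended-trees under the $(n-1)$-closure operation (obtainable from the Broersma--Tuinstra degree-sum condition $d(u)+d(v)\ge n-k+1$ together with the standard idempotency facts for closures) shows that $G^{*}:=C_{n-1}(G)$ is again $t$-connected of order $n$, still has no spanning $k$-ended-tree, and satisfies $e(G^{*})\ge e(G)$. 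So it suffices to treat the case that $G$ is $(n-1)$-closed and to prove $G\cong H$.

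The second step is to exploit density. From $e(G)\ge\binom{n-k-t}{2}+(k+t-1)^2+k+t$ one obtains $e(\overline{G})=O\big((k+t)n\big)$, so all but at most $O(k+t)$ vertices of $G$ have degree larger than $\tfrac n2$; as $G$ is $(n-1)$-closed, any two such vertices are adjacent, hence the set $B$ of these high-degree vertices induces a clique, and the complementary set $A=V(G)\setminus B$ has size $O(k+t)$. Since $B$ is a clique, every independent set of $G$ meets $B$ in at most one vertex, so $\alpha(G)\le|A|+1$; on the other hand $G$ is $\kappa(G)$-connected with $\kappa(G)\ge t$ and has no spanning $k$-ended-tree, so Win's Chv\'{a}tal--Erd\H{o}s-type theorem \cite{S2} (a $\kappa$-connected graph with $\alpha\le\kappa+k-1$ has a spanning $k$-ended-tree) gives $\alpha(G)\ge\kappa(G)+k\ge t+k$, whence $|A|\ge t+k-1$. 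Writing $e(G)=\binom{|B|}{2}+e(A,B)+e(G[A])$ and using that each vertex of $A$ has degree at most $\tfrac n2$ but at least $t$ (the latter by $t$-connectivity), I would bound the right side against the hypothesis.

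The third step is the dichotomy. A careful analysis of the last inequality, together with $(n-1)$-closedness and the order hypothesis $n\ge\max\{6k+6t-1,\ k^2+kt+t+1\}$, should show that either (i) $|A|=t+k-1$ and every vertex of $A$ is joined to one common $t$-subset $T_0$ of $B$ and to no other vertex --- in which case $T_0$, lying in the clique $B$, is joined to all of $B\setminus T_0$, so $T_0$ is the apex, $B\setminus T_0=K_{n-k-2t+1}$ is the big clique, the vertices of $A$ form the $(k+t-1)K_1$, and $G\cong K_t\vee(K_{n-k-2t+1}\cup(k+t-1)K_1)=H$ --- or (ii) $G$ has a spanning $k$-ended-tree, which I would build directly: take a Hamilton path of the clique $B$, routing it so that for as many $v\in A$ as possible two consecutive path-vertices are neighbours of $v$, insert those vertices of $A$ at the corresponding positions or at the ends of the path, and attach the at most $k-2$ remaining vertices of $A$ as pendant leaves; here the size of $n$ relative to $k$ and $t$ is precisely what provides the room to carry out all the insertions while keeping at most $k$ leaves. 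Case (ii) contradicts the assumption, so $G\cong H$; but $H$ itself has no spanning $k$-ended-tree, since deleting its apex $K_t$ leaves $t+k>t+k-1$ components, contradicting $C_{n-1}(G)\not\cong H$. (One also records that the hypothesis is non-vacuous, since $e(H)-\big(\binom{n-k-t}{2}+(k+t-1)^2+k+t\big)=n-(k^2+kt+t+1)\ge 0$.)

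The step I expect to be the main obstacle is the dichotomy of the third step --- extracting the rigid structure from the edge count and carrying out the tree constructions in the remaining cases. The edge bound alone does \emph{not} force $G\cong H$: for example the graph obtained from $H$ by turning its $k+t-1$ extra vertices into pendant vertices at a single vertex of the big clique has strictly more edges than $H$, yet still admits a spanning $k$-ended-tree (those pendants may be attached to the apex when the tree is built). So the non-existence of such a tree must be exploited in an essential, case-by-case manner, and arranging the leaf-bookkeeping in the constructions so that everything closes under exactly the stated lower bounds on $n$ is the delicate point.
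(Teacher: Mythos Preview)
The paper does not prove this statement at all: Lemma~\ref{edge2} is quoted verbatim from \cite{JZ} (Zheng, Huang and Wang) and is used as a black box in the proofs of Theorems~\ref{tf} and~\ref{tg}. So there is no ``paper's own proof'' to compare your attempt against.

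That said, your outline is a reasonable sketch of how such edge-density results are typically proved, and is almost certainly close in spirit to what \cite{JZ} does: pass to the $(n-1)$-closure via Theorem~\ref{closure2}, observe that closedness plus the edge bound forces a large clique $B$ and a small residual set $A$, invoke Win's Chv\'atal--Erd\H{o}s-type theorem to get $\alpha(G)\ge \kappa(G)+k\ge t+k$ and hence $|A|\ge k+t-1$, and then grind through the structural dichotomy. Your own assessment of the difficulty is accurate: the first two steps are routine, while the third step --- pinning down the exact structure $K_t\vee(K_{n-k-2t+1}\cup(k+t-1)K_1)$ from the edge count plus closedness plus $t$-connectivity, and constructing the spanning $k$-ended-tree in every other case --- is where essentially all the work lies and where the precise constants $6k+6t-1$ and $k^2+kt+t+1$ must be used. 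As written, that step is only a plan, not a proof; you would need to consult \cite{JZ} (or reproduce its case analysis) to fill it in.
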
 

\begin{lemma}{\rm(\!\!\cite{JZ})}\label{k-ended tree}
	Let $n$, $t\geq 1$, $k\geq 2$ be positive integers and $G=K_t\vee(K_{n-k-2t+1}\cup (k+t-1)K_1)$. Then $G$ does not have spanning $k$-ended-tree.
\end{lemma}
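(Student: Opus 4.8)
The plan is to use the standard necessary condition for the existence of a spanning tree with few leaves: deleting a set of vertices cannot create too many components. Concretely, I would first establish the following claim: \emph{if a connected graph $H$ has a spanning tree with at most $k$ leaves, then $c(H-S)\le |S|+k-1$ for every $S\subseteq V(H)$}, where $c(\cdot)$ denotes the number of connected components. Granting this, the lemma is immediate. Take $S$ to be the vertex set of the $K_t$ factor in $G=K_t\vee(K_{n-k-2t+1}\cup (k+t-1)K_1)$, so that $|S|=t$ and $G-S=K_{n-k-2t+1}\cup (k+t-1)K_1$ has exactly $(k+t-1)+1=k+t$ components (the clique $K_{n-k-2t+1}$ is nonempty since the order of $G$ forces $n-k-2t+1\ge 1$). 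If $G$ had a spanning $k$-ended-tree, the claim would give $k+t=c(G-S)\le t+k-1$, a contradiction, so $G$ has no spanning $k$-ended-tree.

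To prove the claim, let $T$ be a spanning tree of $H$ with leaf set $L$, $|L|\le k$. Since $T-S$ is a spanning forest of $H-S$, we have $c(H-S)\le c(T-S)$, so it suffices to show $c(T-S)\le |S|+|L|-1$. Delete the vertices of $S$ from $T$ one at a time: removing a vertex $v$ from the current forest changes the number of components by at most $d_T(v)-1$, hence $c(T-S)\le 1+\sum_{v\in S}(d_T(v)-1)$. Leaves of $T$ contribute $0$ to this sum, and for an internal vertex one writes $d_T(v)-1=(d_T(v)-2)+1$; since the sum of $d_T(v)-2$ over all internal vertices of $T$ equals $\big(2(n-1)-2n\big)+|L|=|L|-2$ (a nonnegative quantity summed only over a subset) and there are at most $|S|$ internal vertices of $T$ inside $S$, we get $\sum_{v\in S}(d_T(v)-1)\le (|L|-2)+|S|$. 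Therefore $c(T-S)\le |S|+|L|-1\le |S|+k-1$, as claimed.

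There is essentially no hard step here: the only point needing a little care is the component-counting bound, and even that is a one-line induction on $|S|$ combined with the handshake identity for trees; alternatively one may argue directly that in $T-S$ each of the $k+t-1$ vertices of the independent set is isolated because its $G$-neighbours (hence its $T$-neighbours) all lie in $S$. What I would be careful to record is the mild hypothesis making $G$ genuinely deficient, namely that $K_{n-k-2t+1}$ contains at least one vertex (equivalently $n\ge k+2t$): for $n=k+2t-1$ the graph $K_t\vee(k+t-1)K_1$ in fact admits a spanning $k$-ended-tree, so this condition, which is automatically satisfied in the range of orders used in Lemma~\ref{edge2}, cannot be dropped.
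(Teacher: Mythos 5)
Your proof is correct. Note first that the paper does not actually prove this lemma: it is imported verbatim from \cite{JZ}, and the authors only append Remark~2.9 observing that the statement needs $n\ge k+2t$ (since $K_t\vee(k+t-1)K_1$ does have a spanning $k$-ended-tree). Your argument supplies a complete proof under exactly that extra hypothesis, and you correctly flag that the hypothesis cannot be dropped, so you have in effect reproved the corrected statement. The route you take --- the component-counting necessary condition $c(H-S)\le |S|+k-1$ for graphs with a spanning tree having at most $k$ leaves, applied with $S=V(K_t)$ so that $c(G-S)=k+t>t+k-1$ --- is the standard one for this family, and your derivation of the bound via $c(T-S)\le 1+\sum_{v\in S}(d_T(v)-1)$ together with the handshake identity is sound (removing a vertex from a forest increases the component count by at most its current degree minus one, and the sum of $d_T(v)-2$ over internal vertices equals $|L|-2\ge 0$). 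It is worth noting that the paper's own Proposition~3.1, which handles the analogous non-existence statement for spanning $k$-trees in $K_t\vee(K_{n-kt-1}\cup(kt-t+1)K_1)$, uses a different mechanism: a degree-sum estimate $\sum_{v\in V(K_t)}d_T(v)\ge kt+1$ forcing some vertex of the $K_t$ to have tree-degree at least $k+1$. Your component/leaf count is the natural dual of that argument for the $k$-ended-tree setting, and your closing one-line alternative (the $k+t-1$ independent vertices become isolated in $T-S$) is exactly the shortcut most sources use.
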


\begin{remark}
	We notice that Lemma \ref{k-ended tree} should hold only if $n\geq k+2t$ since $K_t\vee (k+t-1)K_1$ has a spanning $k$-ended-tree.
\end{remark}

\section{Spanning $k$-trees}\label{sec3}
 \hspace{1.5em} In this section, we establish some spectral conditions for the existence of spanning $k$-trees in $t$-connected graphs, which generalize the results of Fan et al. \cite{DT} and Zhou \cite{BZ}, and improve the results of Fiedler et al. \cite{MF} and Ao et al. \cite{GYA}. Now, we recall some relevant conclusions.
 
 Kano and Kishimoto \cite{MK} proved the following closure theorem to assure that a $t$-connected graph has a spanning $k$-tree.

\begin{theorem}{\rm(\!\!\cite{MK})}\label{closure}
	Let $t\geq1$ and $k\geq2$ be integers, $G$ be a $t$-connected graph of order $n$. Then $G$ has a spanning $k$-tree if and only if the $(n-kt+2t-1)$-closure $C_{n-kt+2t-1}(G)$ of $G$ has a spanning $k$-tree. 
\end{theorem}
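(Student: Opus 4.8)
The plan is to prove this by the closure (Bondy--Chv\'atal) method. Put $l=n-kt+2t-1$. Since adding an edge to a $t$-connected graph keeps it $t$-connected and does not change its order, and since adding an edge can only increase the degree sum of a pair that remains nonadjacent, the $l$-closure $C_l(G)$ is well defined and is obtained from $G$ by a finite sequence $G=G_0\subseteq G_1\subseteq\cdots\subseteq G_m=C_l(G)$ in which $G_{i+1}=G_i+u_iv_i$ with $u_iv_i\notin E(G_i)$ and $d_{G_i}(u_i)+d_{G_i}(v_i)\ge l$. Thus it suffices to prove the single-edge statement: \emph{if $G$ is $t$-connected of order $n$ and $u,v$ are nonadjacent with $d_G(u)+d_G(v)\ge l$, then $G$ has a spanning $k$-tree if and only if $G+uv$ does}; chaining this along the sequence yields the theorem. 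One direction of the single-edge statement is immediate, since a spanning $k$-tree of $G$ is a spanning subgraph of $G+uv$ with the same maximum degree.

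For the converse, let $T$ be a spanning $k$-tree of $G+uv$. If $uv\notin E(T)$ then $T\subseteq G$ and we are done, so assume $uv\in E(T)$ and write $T-uv=T_u\cup T_v$ with $u\in V(T_u)$ and $v\in V(T_v)$, so that $|V(T_u)|+|V(T_v)|=n$. Call a vertex $w$ \emph{saturated} if $d_{T-uv}(w)=k$; note that $u$ and $v$ are unsaturated, since $d_{T-uv}(u)=d_T(u)-1\le k-1$ and likewise for $v$. If some edge $xy\in E(G)$ has $x\in V(T_u)$, $y\in V(T_v)$ and neither endpoint saturated, then $(T-uv)+xy$ is a spanning $k$-tree of $G$ and we are done. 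Hence we may assume that every edge of $G$ across the cut $(V(T_u),V(T_v))$ has a saturated endpoint; in particular every $G$-neighbour of $u$ lying in $V(T_v)$, and every $G$-neighbour of $v$ lying in $V(T_u)$, is saturated.

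The core of the argument is a rotation/exchange step. Given a cross-edge $xy\in E(G)$ with exactly one saturated endpoint, one may add $xy$ to $T$ and delete an edge of the resulting fundamental cycle that is incident to the saturated endpoint; since $u,v$ are unsaturated the deleted edge is not $uv$, so this produces a new spanning $k$-tree of $G+uv$ still using $uv$, but for which one side of the associated cut is strictly smaller. Iterating such exchanges, one attempts to reduce the $u$-side to the single vertex $u$, i.e.\ to make $u$ a leaf of the current tree whose only neighbour is $v$; then any unsaturated $G$-neighbour $w$ of $u$ (which lies on the other side and exists because $uv\notin E(G)$ while $d_G(u)\ge t\ge 1$) gives the desired spanning $k$-tree $(T-uv)+uw$ of $G$, unless \emph{all} $G$-neighbours of $u$ are saturated, a situation to be excluded by a further count. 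The hypotheses $d_G(u)+d_G(v)\ge n-kt+2t-1$ and the $t$-connectivity of $G$ are exactly what is needed to rule out a "stuck" configuration in which no profitable exchange exists yet no spanning $k$-tree of $G$ has emerged: such a configuration forces the cross-edges of $G$ to issue from a set of about $t$ vertices dominating many saturated near-pendant vertices, and then the bound $|N_G(u)|+|N_G(v)|\le(|V(T_u)|-1)+(|V(T_v)|-1)+(\text{that small number})$ contradicts the degree-sum hypothesis. For $t=1$, $k=2$ this collapses to the classical Bondy--Chv\'atal path closure, where $l=n-1$.

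I expect the main obstacle to be making this exchange argument rigorous: fixing the right extremal choice of spanning $k$-tree of $G+uv$ among those containing $uv$ (for instance one minimising the size of the $u$-side, or a weighted count of saturated vertices), carefully tracking how saturation status migrates under each exchange, and pinning down the single inequality at which $l=n-kt+2t-1$ and the $t$-connectivity of $G$ become tight. The complete bookkeeping of this step is carried out by Kano and Kishimoto in \cite{MK}.
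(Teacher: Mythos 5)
The paper does not prove this statement at all: it is quoted verbatim from Kano and Kishimoto \cite{MK} and used as a black box, so there is no internal proof to compare your argument against. Judged on its own, your proposal is an outline, not a proof, and the gap sits exactly where the theorem's content lies.

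The reduction to a single edge addition, the trivial direction, and the observation that a spanning $k$-tree $T$ of $G+uv$ may be assumed to contain $uv$ (so that $T-uv=T_u\cup T_v$ with $u,v$ unsaturated) are all fine and standard. But everything after that is deferred. First, the exchange step is not well-founded as described: adding a cross-edge $xy$ and deleting a cycle edge at the saturated endpoint shrinks one side of the cut $(V(T_u),V(T_v))$ only by enlarging the other, so ``iterating such exchanges to reduce the $u$-side to the single vertex $u$'' has no monotone quantity behind it and no termination argument; you would need to fix a concrete extremal choice of $T$ (you gesture at this but do not commit to one) and verify that a profitable exchange contradicts that extremality. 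Second, and more importantly, the entire quantitative heart of the theorem --- deriving a contradiction with $d_G(u)+d_G(v)\ge n-kt+2t-1$ in the ``stuck'' configuration, which is the only place the threshold $n-kt+2t-1$ and the $t$-connectivity of $G$ can enter --- is replaced by an impressionistic sentence about ``a set of about $t$ vertices dominating many saturated near-pendant vertices'' and an inequality containing the placeholder ``(that small number)''. A correct count here is delicate (one must bound the number of saturated vertices in each component, typically via the degree sum $2(n_i-1)$ of a tree on $n_i$ vertices, and then invoke $t$-connectivity to control how the cross-edges distribute), and without it the specific closure parameter is not justified. Since you explicitly hand this step back to \cite{MK}, the proposal does not constitute an independent proof of the statement.
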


Using the toughness-type condition, Fan, Goryainov, Huang and Lin \cite{DT} posed the following spectral radius conditions for the existence of a spanning $k$-tree in a connected graph.

\begin{theorem}{\rm(\!\!\cite{DT})}\label{t0}
	Let $k\geq 3$, $G$ be a connected graph of order $n \geq 2k+16$. If $\rho(G)\geq\rho(K_1\vee(K_{n-k-1}\cup kK_1))$ or $q(G)\geq q(K_1\vee(K_{n-k-1}\cup kK_1)),$  then $G$ contains a spanning $k$-tree unless $G\cong K_1\vee(K_{n-k-1}\cup kK_1)$.
\end{theorem}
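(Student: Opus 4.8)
\noindent The plan is to argue by contradiction, running the adjacency and the signless Laplacian cases in parallel. Suppose $G$ is a connected graph of order $n\ge 2k+16$ that has no spanning $k$-tree, is not isomorphic to $H:=K_1\vee(K_{n-k-1}\cup kK_1)$, and satisfies $\rho(G)\ge\rho(H)$ (or $q(G)\ge q(H)$). The first step is a structural reduction. By a theorem of Win \cite{S1} (see also \cite{ME}), a connected graph has a spanning $k$-tree if and only if $c(F-S)\le(k-1)|S|+1$ for every $S\subseteq V(F)$, where $c(\cdot)$ denotes the number of components; hence there is a nonempty $S$ with $s:=|S|$ and $c(G-S)\ge(k-1)s+2$. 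As $G-S$ has only $n-s$ vertices, this forces $ks\le n-2$, so $1\le s\le(n-2)/k$.

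Next I would pass to a denser graph carrying the same obstruction. Let $D_1,\dots,D_c$ be the components of $G-S$, with $c\ge(k-1)s+2$, and let $G'$ be obtained from $G$ by turning $S$ into a clique, turning each $D_i$ into a clique, and joining every vertex of $S$ to every vertex of $V(G)\setminus S$. Then $G\subseteq G'$ (there is no edge of $G$ between distinct $D_i$'s), $G'=K_s\vee(K_{|D_1|}\cup\cdots\cup K_{|D_c|})$, and, writing $H_s:=K_s\vee(K_{n-ks-1}\cup((k-1)s+1)K_1)$ (so $H_1=H$),
\[
e(G')=\binom{s}{2}+s(n-s)+\sum_{i}\binom{|D_i|}{2}\le\binom{s}{2}+s(n-s)+\binom{n-ks-1}{2}=e(H_s),
\]
since $\sum_i\binom{|D_i|}{2}$ is maximized when one part has size $n-s-(c-1)$ and the others are singletons, and this quantity is decreasing in $c$. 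Moreover $\delta(G')\ge s$ and $G'$ is connected.

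For $s\ge 2$ I would conclude by a counting estimate. Since $\delta(G')\ge s$, Lemma \ref{r1} applied to $G'$ (using $2e(G')\le 2e(H_s)$) bounds $\rho(G)\le\rho(G')$ from above by an explicit function of $n,k,s$; for the signless Laplacian, Lemma \ref{q1} gives $q(G)\le q(G')\le\frac{2e(G')}{n-1}+n-2\le\frac{2e(H_s)}{n-1}+n-2$. On the other hand $K_{n-k}$ is a proper subgraph of $H$, so $\rho(H)>n-k-1$ and $q(H)>2(n-k)-2$ by Lemma \ref{rq1}. Computing $e(H_s)$ explicitly, one checks that for every $2\le s\le(n-2)/k$ and every $n\ge 2k+16$ the two upper bounds above are at most $n-k-1$ and $2(n-k)-2$ respectively, contradicting $\rho(G)\ge\rho(H)$ (resp.\ $q(G)\ge q(H)$). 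Hence $s=1$.

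Finally, for $s=1$ we have $G\subseteq G'=K_1\vee(K_{|D_1|}\cup\cdots\cup K_{|D_c|})$ with $c\ge k+1$ and $\sum_i|D_i|=n-1$. For a fixed number of parts, a standard vertex-transfer (Kelmans-type) comparison --- valid for both $\rho$ and $q$ --- shows the spectral radius is maximized by the unbalanced partition (one clique, the remaining parts singletons), strictly unless the partition is already unbalanced; and among such unbalanced graphs, increasing the number of parts deletes edges (split a singleton off the large clique), so by Lemma \ref{rq1} the maximum over $c\ge k+1$ is attained only at $c=k+1$, i.e.\ at $H$ itself. Hence $\rho(G')\le\rho(H)$ and $q(G')\le q(H)$, with equality only if $G'=H$. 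Combined with $\rho(H)\le\rho(G)\le\rho(G')$ (and likewise for $q$), every inequality is an equality, so $G'=H$; as $G\subseteq G'=H$ and $H$ is connected, Lemma \ref{rq1} gives $G=H$, contradicting the choice of $G$. The main obstacle is the $s\ge 2$ case: one must compute $e(H_s)$ and push the bounds of Lemmas \ref{r1} and \ref{q1} far enough to undercut $n-k-1$ and $2(n-k)-2$ uniformly over $2\le s\le(n-2)/k$ --- with $s=2$ and $s$ near $(n-2)/k$ the extreme cases --- and this is precisely where the hypothesis $n\ge 2k+16$ is used; the reduction via Win's theorem and the $s=1$ uniqueness step are routine.
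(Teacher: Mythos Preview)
The paper does not prove this theorem: it is quoted from \cite{DT} without proof, so there is no argument in the paper to compare your attempt against. The surrounding text does note that \cite{DT} obtained the result ``using the toughness-type condition'', which is the route you take via Win's criterion; by contrast, the paper's own generalisation (Theorem~\ref{ta}) proceeds through the Kano--Kishimoto closure theorem (Theorem~\ref{closure}) together with the edge-count Lemma~\ref{edge}, rather than through Win's theorem.

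One point in your sketch to revisit: Win's theorem \cite{S1,ME} asserts that $c(G-S)\le(k-2)|S|+2$ for every $S$ is sufficient for a spanning $k$-tree; the bound $c(G-S)\le(k-1)|S|+1$ that you quote is the easy \emph{necessary} direction, not the sufficient one. The contrapositive therefore only supplies an $S$ with $c(G-S)\ge(k-2)s+3$. For $s=1$ this still gives $c\ge k+1$, so your endgame identifying $H$ is unaffected, but for $s\ge2$ your extremal graph $H_s$ and the numerical comparison via Lemmas~\ref{r1} and~\ref{q1} would have to be redone with the smaller guaranteed component count (the large clique in the extremal configuration then has order $n-(k-1)s-2$ rather than $n-ks-1$).
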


Ao, Liu, Yuan, Ng and Cheng \cite{GYA} proved a spectral condition to guarantee the existence of a spanning $k$-tree in a $t$-connected graph.

\begin{theorem}{\rm(\!\!\cite{GYA})}\label{t1}
	Let $G$ be a $t$-connected graph of order $n\geq \max\{(7k-2)t+4,(k-1)t^2+\frac12(3k+1)t+\frac92\}$, where $t\geq1$ and $k\geq2$ are integers. If $\rho(G)\geq\rho(K_t\vee(K_{n-kt-1}\cup (kt-t+1)K_1))$, then $G$ has a spanning $k$-tree unless $G\cong K_t\vee (K_{n-kt-1}\cup (kt-t+1)K_1)$.
\end{theorem}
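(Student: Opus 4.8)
Write $H:=K_t\vee(K_{n-kt-1}\cup (kt-t+1)K_1)$ for the conjectured extremal graph. The plan is to argue by contradiction: suppose $G$ is a $t$-connected graph of order $n$ in the stated range with $\rho(G)\ge\rho(H)$, yet $G$ has no spanning $k$-tree and $G\not\cong H$. I will show that $\rho(G)\ge\rho(H)$ already forces $e(G)$ to be large enough for the edge condition of Ao et al.\ (Lemma~\ref{edge}) to apply, and then a monotonicity argument will pin $G$ down to $H$ itself, contradicting $G\not\cong H$.

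The first step is to convert the spectral hypothesis into a lower bound on $e(G)$. Since $G$ is $t$-connected we have $\delta(G)\ge t$, so Lemma~\ref{r1} gives $\rho(G)\le\frac12\big(t-1+\sqrt{(t+1)^2+4(2e(G)-nt)}\big)$. For the lower side, $H$ contains the clique on $V(K_t)\cup V(K_{n-kt-1})$, which is a $K_{n-(k-1)t-1}$, so Lemma~\ref{rq1} yields $\rho(H)\ge n-(k-1)t-2=:c$. Chaining $c\le\rho(H)\le\rho(G)$ through the previous inequality and isolating $e(G)$ gives $2e(G)\ge (c-t)(c+1)+nt$. Substituting $c=n-(k-1)t-2$ and simplifying, this lower bound is at least $2\big[\binom{n-(k-1)t-2}{2}+(k-1)t^2+(k+1)t+3\big]$ precisely when $n\ge\frac12\big((k-1)t^2+(4k-1)t+10\big)$, and the hypothesis $n\ge (k-1)t^2+\frac12(3k+1)t+\frac92$ implies this bound (the two agree only at $t=1$ and the slack is strictly positive for $t\ge2$). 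Hence $e(G)\ge\binom{n-(k-1)t-2}{2}+(k-1)t^2+(k+1)t+3$.

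Now $G$ is $t$-connected of order $n\ge (7k-2)t+4$, has at least this many edges, and has no spanning $k$-tree, so Lemma~\ref{edge} forces $C_{n-(k-2)t-1}(G)\cong H$. Since $G$ is a spanning subgraph of its own closure $C_{n-(k-2)t-1}(G)$ and $H$ is connected, Lemma~\ref{rq1} gives $\rho(G)\le\rho(H)$, with equality only if $G=C_{n-(k-2)t-1}(G)\cong H$. Together with the hypothesis $\rho(G)\ge\rho(H)$ this forces $G\cong H$, contradicting our assumption; this completes the argument.

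The step I expect to be the main obstacle is the edge-count estimate in the second paragraph: one must check that the crude clique bound $\rho(H)\ge n-(k-1)t-2$ is already strong enough to clear the Lemma~\ref{edge} threshold under exactly the stated order condition, and in the boundary case $t=1$ one should note that $H$ \emph{properly} contains $K_{n-k}$, so $\rho(H)>n-k-1$ strictly, which is what makes the integer inequality $e(G)\ge\binom{n-(k-1)t-2}{2}+(k-1)t^2+(k+1)t+3$ hold there. Everything else is routine: $t$-connectivity enters only through $\delta(G)\ge t$ for Lemma~\ref{r1} and through the order hypothesis of Lemma~\ref{edge}, and the endgame is the standard ``$G$ is a proper spanning subgraph of the extremal graph, hence has strictly smaller spectral radius'' argument.
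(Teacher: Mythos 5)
Your argument is correct and is essentially the method the paper itself uses: Theorem~\ref{t1} is only cited here, but the paper's proof of the stronger Theorem~\ref{ta}(i) follows exactly your chain (Lemma~\ref{r1} plus $\rho(H)>n-(k-1)t-2$ to get the edge bound of Lemma~\ref{edge}, then the closure and the proper-spanning-subgraph argument), and your arithmetic reduction of the edge inequality to $n\ge\frac12\bigl((k-1)t^2+(4k-1)t+10\bigr)$ matches the paper's threshold, which the stated hypothesis dominates since the difference is $(t-1)\bigl(\frac t2(k-1)+\frac12\bigr)\ge0$. Your worry about strictness at $t=1$ is harmless but unnecessary, since $e(G)\ge x\ge y$ already suffices for Lemma~\ref{edge}.
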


Fiedler and Nikiforov \cite{MF}, Zhou \cite{BZ} proved spectral conditions of the complement of graphs for the existence of Hamilton path.

\begin{theorem}{\rm(\!\!\cite{MF})}\label{t3}
	Let $G$ be a connected graph of order $n$ with its complement $\overline{G}$. If $\rho(\overline{G})\leq \sqrt{n-1}$, then $G$ contains a Hamilton path.
\end{theorem}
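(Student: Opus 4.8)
The plan is to argue by contraposition: assume $G$ is connected of order $n$ with no Hamilton path, and deduce $\rho(\overline G)>\sqrt{n-1}$. The small orders are immediate — for $n\le 3$ every connected graph is traceable, and for $n=4$ the only connected non-traceable graph is $K_{1,3}$, for which $\rho(\overline{K_{1,3}})=\rho(K_3\cup K_1)=2>\sqrt3$ — so one may assume $n\ge 5$.

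First I would extract a degree obstruction. Since $G$ has a Hamilton path if and only if $G\vee K_1$ has a Hamilton cycle, a non-traceable $G$ makes $G\vee K_1$ (of order $n+1$) non-Hamiltonian, so Chv\'atal's classical degree-sequence criterion for Hamiltonicity must fail for $G\vee K_1$; translating this back to $G$ gives an integer $k$ with $1\le k<\frac{n+1}{2}$ such that, writing $d_1\le d_2\le\cdots\le d_n$ for the degree sequence of $G$, one has $d_k\le k-1$ and $d_{n+1-k}\le n-k-1$. Connectedness forces $d_1\ge 1$, so $k=1$ is impossible and $k\ge 2$; this is the only place the connectedness hypothesis is used (without it $K_{n-1}\cup K_1$ would be a genuine exception, its complement $K_{1,n-1}$ having spectral radius exactly $\sqrt{n-1}$). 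Passing to $\overline G$: $d_k\le k-1$ means at least $k$ vertices of $\overline G$ have degree $\ge n-k$, and $d_{n+1-k}\le n-k-1$ means at least $n+1-k$ vertices of $\overline G$ have degree $\ge k$. As $2k\le n$ forces $n-k\ge k$, the first family lies inside the second, so $\overline G$ has at least $k$ vertices of degree $\ge n-k$ together with at least $n+1-2k$ further vertices of degree $\ge k$.

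Next I would feed this into Lemma~\ref{r2}. The degree count gives
$$\sum_{v\in V(\overline G)}d_{\overline G}(v)^2\ \ge\ k(n-k)^2+(n+1-2k)k^2\ =\ kn(n-k)-k^2(k-1),$$
while Lemma~\ref{r2} gives $n\,\rho(\overline G)^2\ge\sum_{v}d_{\overline G}(v)^2$. So it remains to check that
$$kn(n-k)-k^2(k-1)\ >\ n(n-1)\qquad\text{for all integers }k\text{ with }2\le k<\frac{n+1}{2}\text{ and }n\ge 5.$$
This is a short computation: the difference factors as $(k-1)\,\bigl[n^2-(k+1)n-k^2\bigr]$, and in the stated range the bracket is positive, since the larger root of $n^2-(k+1)n-k^2$ (as a quadratic in $n$) lies below $2k+1\le n$, the one borderline pair $(n,k)=(4,2)$ being excluded by $n\ge 5$. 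Hence $n\,\rho(\overline G)^2>n(n-1)$, i.e.\ $\rho(\overline G)>\sqrt{n-1}$, which is the contrapositive.

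The part I expect to be the real obstacle is recognising that \emph{both} halves of the Chv\'atal obstruction must be used. An Ore-type consequence alone — a single non-adjacent pair $u,v$ of $G$ with $d_G(u)+d_G(v)\le n-2$, i.e.\ an edge of $\overline G$ with endpoint degree sum $\ge n$ — only produces a double star inside $\overline G$, which need not have spectral radius exceeding $\sqrt{n-1}$ (in the balanced case it is only about $\sqrt{(n-1)/2}$). It is the accumulation of high-degree vertices guaranteed by both inequalities — $k$ vertices of degree $\ge n-k$ and a further $n+1-2k$ of degree $\ge k$ — that forces $\sum_v d_{\overline G}(v)^2$ past $n(n-1)$ and lets Lemma~\ref{r2} close the argument; the rest is degree bookkeeping together with the handful of small orders.
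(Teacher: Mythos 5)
Your argument is correct, and it is worth noting at the outset that the paper itself gives no proof of Theorem~\ref{t3}: it is quoted from Fiedler and Nikiforov, so there is no internal proof to match against. Your route --- pass to $G\vee K_1$, invoke Chv\'atal's degree-sequence criterion to extract the two-sided obstruction $d_k\le k-1$ and $d_{n+1-k}\le n-k-1$, and then feed \emph{both} inequalities into Hofmeister's bound $\sum_v d(v)^2\le n\rho^2$ (Lemma~\ref{r2}) --- is genuinely different from the template this paper uses for its own generalizations (Theorem~\ref{tc}, Corollary~\ref{coro10}, Theorems~\ref{td} and~\ref{te}): there one takes the $(n-1)$-closure, converts ``every nonadjacent pair has degree sum $\le n-2$'' into ``every edge of the complement has degree sum $\ge n$'', bounds $e(\overline{H})$ via Lemma~\ref{r2} or Lemma~\ref{r3}, and closes with a heavy edge-extremal lemma (Lemma~\ref{edge}) that requires $n$ large and produces exceptional graphs. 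Using the full Chv\'atal condition rather than only its Ore-type corollary is precisely what lets you hit the clean threshold $\sqrt{n-1}$ with no exceptional graph and no largeness assumption beyond the trivial small orders; your closing remark correctly diagnoses why the Ore/closure route alone cannot reach $\sqrt{n-1}$. I checked the identity $k(n-k)^2+(n+1-2k)k^2-n(n-1)=(k-1)\bigl(n^2-(k+1)n-k^2\bigr)$ and the positivity of the bracket on the stated range, so the computation closes.

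One line deserves tightening, though it is not a genuine gap. You justify $n^2-(k+1)n-k^2>0$ by saying the larger root lies below $2k+1\le n$, but the range $k<\frac{n+1}{2}$ only guarantees $2k\le n$, and $n=2k$ does occur (for every $k\ge 3$). At $n=2k$ the bracket times $(k-1)$ equals $(k-1)k(k-2)$, which is positive for $k\ge3$ and vanishes only at the excluded pair $(n,k)=(4,2)$; equivalently, the larger root is below $2k$ whenever $k\ge 3$ and equals $4$ when $k=2$. State it that way and the boundary case is covered.
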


Let $\mathbb{EP}_n$ be the set of graphs on $n$ vertices of three types: (a) a regular graph of degree $\frac{n}{2} - 1$; (b) a graph consisting of two complete components; (c) the join of a regular graph of degree $\frac{n}{2} -1- r$ and a graph on $r$ vertices, where $1 \leq r \leq \frac{n}{2}-1$.

\begin{theorem}{\rm(\!\!\cite{BZ})}\label{t4}
	Let $G$ be a graph of order $n$ with its complement $\overline{G}$. If $q(\overline{G})\leq n$ and $G\notin \mathbb{EP}_n$, then $G$ contains a Hamilton path.
\end{theorem}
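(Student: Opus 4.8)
\
I would prove the sharp contrapositive: if $G$ has no Hamilton path and $q(\overline G)\le n$, then $G\in\mathbb{EP}_n$ (every member of $\mathbb{EP}_n$ in fact has $q(\overline{\cdot})=n$, so this is the right target). Begin with a closure reduction. Since $G$ has a Hamilton path iff its $(n-1)$-closure $\widehat G:=C_{n-1}(G)$ does, $\widehat G$ has no Hamilton path; as $G$ is a spanning subgraph of $\widehat G$, Lemma~\ref{rq1} gives $q(\overline{\widehat G})\le q(\overline G)\le n$. Moreover $\widehat G$ being $(n-1)$-closed says precisely that \emph{every edge of $\overline{\widehat G}$ joins two vertices whose $\overline{\widehat G}$-degrees sum to at least $n-1$}; in particular, using the identity $z(H)=\sum_{uv\in E(H)}\bigl(d(u)+d(v)\bigr)$, one gets $z(\overline{\widehat G})\ge (n-1)\,e(\overline{\widehat G})$. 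Once $\widehat G\in\mathbb{EP}_n$ is established, $G\in\mathbb{EP}_n$ follows: in each type the complement $\overline{\widehat G}$ contains a connected subgraph $H$ (a complete bipartite graph in type (b), a $\tfrac n2$-regular graph in types (a) and (c), e.g.\ $K_{\frac n2+1}$) with $q(H)=n$, so by the strict part of Lemma~\ref{rq1} no edge of $\overline G\setminus\overline{\widehat G}$ may lie in or touch the component of $H$; hence $G$ agrees with $\widehat G$ there and retains exactly the freedom $\mathbb{EP}_n$ allows on the remaining $\le\tfrac n2-1$ vertices.

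Next, applying Chvátal's degree-sequence condition to $\widehat G\vee K_1$ (since $\widehat G$ has a Hamilton path iff $\widehat G\vee K_1$ has a Hamilton cycle), there is an integer $i$, $1\le i\le\lfloor n/2\rfloor$, for which $\overline{\widehat G}$ has at least $i$ vertices of degree $\ge n-i$ and at least $n+1-i$ vertices of degree $\ge i$; the extremal model is $K_{i,n-i}$, with $q(K_{a,b})=a+b$ (equality in Lemma~\ref{q2}, $K_{a,b}$ being semiregular). The boundary value $i=1$ is immediate: then $\overline{\widehat G}$ has a dominating vertex, so $K_{1,n-1}\subseteq\overline{\widehat G}$ and $q(\overline{\widehat G})\ge q(K_{1,n-1})=n$; equality forces $\overline{\widehat G}=K_{1,n-1}$, i.e.\ $\widehat G=K_1\cup K_{n-1}\in\mathbb{EP}_n$. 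If $\widehat G$ is disconnected, with components of orders $n_1,\dots,n_c$ $(c\ge2)$, then $\overline{\widehat G}\supseteq K_{n_1,\dots,n_c}$: when $c\ge3$ this contains $K_{a,b,m}$ with $a=n_1$, $b=n_2$, $m=n_3+\cdots+n_c$, and the identity $z(K_{a,b,m})-n\,e(K_{a,b,m})=3abm>0$ with Lemma~\ref{q2} gives $q(\overline{\widehat G})>n$, impossible; when $c=2$, either some component of $\widehat G$ is not complete, so $\overline{\widehat G}\supsetneq K_{n_1,n_2}$ and $q(\overline{\widehat G})>n$ by Lemma~\ref{rq1}, or $\widehat G=K_{n_1}\cup K_{n_2}\in\mathbb{EP}_n$ (type (b)).

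Now let $\widehat G$ be connected. If $\widehat G$ is a join, $\overline{\widehat G}$ is disconnected, and since $\widehat G$ has no Hamilton path the independence-number obstruction (which, under the closure condition, is essentially the only one left for a join) forces some component $C$ of $\overline{\widehat G}$ to have clique number at least $\lceil n/2\rceil+1$; then $K_{\lceil n/2\rceil+1}\subseteq C$, so $q(\overline{\widehat G})\ge q(K_{\lceil n/2\rceil+1})$, which with $q(\overline{\widehat G})\le n$ forces $n$ even, $q(\overline{\widehat G})=n$, and — by Lemma~\ref{rq1} — $C=K_{\frac n2+1}$ as a whole component. Hence $\overline{\widehat G}=K_{\frac n2+1}\cup\overline F$ with $F$ on $\tfrac n2-1$ vertices, i.e.\ $\widehat G=\overline{K_{\frac n2+1}}\vee F$, which is type (c) of $\mathbb{EP}_n$ ($r=\tfrac n2-1$, regular part of degree $0$). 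If $\widehat G$ is connected and not a join, then $\overline{\widehat G}$ is connected; here I would combine the closure bound $z(\overline{\widehat G})\ge(n-1)e(\overline{\widehat G})$ with the presence of $i$ vertices of degree $\ge n-i$ (and the consequent control, via the closure condition, of the few low-degree vertices) to upgrade it to $z(\overline{\widehat G})\ge n\,e(\overline{\widehat G})$, whence $q(\overline{\widehat G})\ge n$ by Lemma~\ref{q2}. Then $q(\overline{\widehat G})=n$, equality in Lemma~\ref{q2} makes $\overline{\widehat G}$ regular or semiregular, and the degree profile excludes the semiregular case (it would force $\overline{\widehat G}=K_{i,n-i}$, whose complement is disconnected); so $\overline{\widehat G}$ is $\tfrac n2$-regular (forcing $n$ even), $\widehat G$ is $(\tfrac n2-1)$-regular, of type (a); for $n$ odd this subcase is vacuous.

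The crux — and the main obstacle — is this last subcase ($\widehat G$ connected, not a join). The degree profile alone does not yield $q(\overline{\widehat G})\ge n$: a $\tfrac n2$-regular graph minus one edge satisfies the profile yet has $q<n$, and the plain closure estimate only gives $q\ge n-1$. Bridging the last unit requires carefully quantifying how the closure condition restricts the neighbourhoods of the low-degree vertices once $i$ vertices of degree $\ge n-i$ are present, and this is most delicate when $i$ is near $n/2$. The other technical burden is the closing step: verifying that $q(\overline{\widehat G})=n$ under all the accumulated constraints leaves exactly the families (a), (b), (c), and correctly identifying each resulting $\overline{\widehat G}$ with the definition of $\mathbb{EP}_n$ (especially the parameter $r$ and the regularity degree in type (c)). These two points carry the weight of the proof.
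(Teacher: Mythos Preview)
The paper does not itself prove Theorem~\ref{t4}; it is quoted from \cite{BZ}. The closest thing in the paper is the proof of Theorem~\ref{te}, whose $k=2$ specialisation is essentially Zhou's argument (for connected $G$), so that is the benchmark to compare against.

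Your plan contains a decisive off-by-one slip that manufactures a difficulty which is not there. In the $(n-1)$-closure $\widehat G$, any nonadjacent pair has $d_{\widehat G}(u)+d_{\widehat G}(v)\le n-2$ (strictly less than $n-1$), hence for every edge $uv$ of $\overline{\widehat G}$ one gets
\[
d_{\overline{\widehat G}}(u)+d_{\overline{\widehat G}}(v)=2(n-1)-\bigl(d_{\widehat G}(u)+d_{\widehat G}(v)\bigr)\ge n,
\]
not merely $\ge n-1$. Consequently $z(\overline{\widehat G})\ge n\,e(\overline{\widehat G})$ and Lemma~\ref{q2} gives $q(\overline{\widehat G})\ge n$ \emph{immediately}. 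Combined with $q(\overline G)\le n$ and Lemma~\ref{rq1}, this forces $q(\overline{\widehat G})=z(\overline{\widehat G})/e(\overline{\widehat G})=n$; equality in Lemma~\ref{q2} then makes the unique nontrivial component $F$ of $\overline{\widehat G}$ regular or semiregular, and the equality $d_{\overline{\widehat G}}(u)+d_{\overline{\widehat G}}(v)=n$ on every edge pins down the structure (semiregular forces $F$ complete bipartite spanning $\overline{\widehat G}$, hence $\widehat G$ disconnected into two cliques; regular forces $F$ to be $\tfrac n2$-regular, yielding types (a)/(c) after accounting for isolated vertices of $\overline{\widehat G}$). This is exactly the route taken in the proof of Theorem~\ref{te}.

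So the ``crux and main obstacle'' you isolate --- bridging a purported gap from $q\ge n-1$ to $q\ge n$ --- does not exist, and the whole apparatus you bring in to close it (Chv\'atal's degree condition on $\widehat G\vee K_1$, the separate treatment of the boundary $i=1$, the join/non-join dichotomy, the delicate neighbourhood analysis near $i\approx n/2$) is unnecessary. Once the arithmetic is corrected, the argument collapses to: closure $\Rightarrow$ degree-sum $\ge n$ in the complement $\Rightarrow$ Lemma~\ref{q2} with equality $\Rightarrow$ the regular/semiregular classification $\Rightarrow$ $\mathbb{EP}_n$. Your final step (descending from $\widehat G$ back to $G$ via the strict part of Lemma~\ref{rq1}) is correct in spirit and matches what the paper does at the end of Theorem~\ref{te}.
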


Motivated by \cite{GYA,DT,MK,MF,BZ}, we study the sufficient conditions to ensure that a $t$-connected graph $G$ has a spanning $k$-tree in terms of $\rho(G)$ or $\rho(\overline{G})$. Firstly, we derive the following proposition.

\begin{proposition}\label{k-tree}
	Let $n$, $t\geq 1$ and $k\geq 2$ be integers with $n\geq kt+2$, $H^*=K_t\vee(K_{n-kt-1}\cup(kt-t+1)K_1)$. Then the maximum degree of each spanning tree of $H^*$ is at least $k+1$, and thus $H^*$ has no spanning $k$-trees.
\end{proposition}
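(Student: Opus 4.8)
The plan is to analyze the structure of an arbitrary spanning tree $T$ of $H^* = K_t \vee (K_{n-kt-1} \cup (kt-t+1)K_1)$ and show that some vertex is forced to have degree at least $k+1$ in $T$. Write $A$ for the vertex set of the $K_t$ part, $B$ for the vertex set of the $K_{n-kt-1}$ part, and $I$ for the set of the $kt-t+1$ isolated vertices; note $|A|=t$, $|I| = kt-t+1$, and $|B| = n-kt-1 \ge 1$ by the hypothesis $n \ge kt+2$. The key observation is that every vertex of $I$ is adjacent in $H^*$ only to vertices of $A$, so in any spanning tree $T$ each vertex of $I$ must be joined (via at least one tree edge) to some vertex of $A$. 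Hence the $|I| = kt-t+1$ tree edges going from $I$ must all land in the $t$-element set $A$.

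First I would make the pigeonhole step precise. Since the $kt-t+1$ vertices of $I$ each send at least one edge of $T$ into $A$, the total number of tree edges incident with $A$ from the $I$-side is at least $kt-t+1$. By pigeonhole, some vertex $a \in A$ receives at least $\lceil (kt-t+1)/t \rceil = \lceil k - 1 + 1/t \rceil = k$ such edges from $I$ (using $t \ge 1$, so $1/t > 0$ forces the ceiling up to $k$). Next I would argue that $a$ must have at least one further tree edge: because $B \cup A$ induces a connected graph on $n - (kt-t+1) = (n-kt-1) + t \ge t+1 \ge 2$ vertices, the tree $T$ cannot consist solely of the star-like edges from $I$ to $A$ — it must also contain edges spanning $A \cup B$, and in particular the component of $T$ restricted to $A \cup B$ is itself a spanning tree of a graph on at least two vertices, so every vertex of $A$ has degree at least $1$ within that part. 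Combining: $d_T(a) \ge k + 1$.

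A small subtlety to handle carefully is the degenerate case $t = 1$: then $A = \{a\}$ is a single vertex, $|I| = k$, all $k$ vertices of $I$ must attach to $a$, and since $|B| = n - k - 1 \ge 1$, the vertex $a$ also needs an edge into $B$ (as $B$'s component must connect to the rest of $T$ through $a$, the only vertex adjacent to $B$ outside $B$), giving $d_T(a) \ge k+1$ again. The main obstacle — though it is really just a matter of being careful rather than a deep difficulty — is justifying rigorously that $a$ has a tree edge outside $I$ in the general case; the cleanest way is to observe that contracting or deleting $I$ leaves a spanning tree (or spanning forest that must in fact be a tree, by connectivity of $T$) of $H^*[A \cup B]$, which has $|A \cup B| \ge 2$ vertices and hence at least one edge at every vertex. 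Once $d_T(a) \ge k+1$ is established for an arbitrary spanning tree $T$, the conclusion that $H^*$ has no spanning $k$-tree is immediate from the definition.
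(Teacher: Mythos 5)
There is a genuine gap in the step where you upgrade the pigeonhole bound from $k$ to $k+1$. Your pigeonhole counts only the tree edges between $I$ and $A$, of which there are at least $|I|=kt-t+1$, so it guarantees a vertex $a\in A$ incident with at least $\lceil (kt-t+1)/t\rceil = k$ of \emph{those} edges; you then claim this particular $a$ must have a further tree edge because ``the component of $T$ restricted to $A\cup B$ is itself a spanning tree of a graph on at least two vertices, so every vertex of $A$ has degree at least $1$ within that part.'' This is false: $T[A\cup B]$ is in general only a forest (deleting $I$ can disconnect $T$, since $T$-paths between vertices of $A$ may pass through $I$), and it can have isolated vertices. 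Concretely, take $t=k=2$, $n=6$, so $A=\{a_1,a_2\}$, $I=\{i_1,i_2,i_3\}$, $B=\{b\}$, and let $T$ have edges $a_1i_1$, $a_1i_2$, $a_2i_2$, $a_2i_3$, $a_2b$. This is a spanning tree; both $a_1$ and $a_2$ receive exactly $k=2$ edges from $I$, so the pigeonhole may hand you $a_1$, which has no further tree edge at all ($a_1$ is isolated in $T[A\cup B]$) and has degree only $k$. Your proposed repair (contracting or deleting $I$ and arguing the result is a tree) fails for the same reason. The conclusion is of course still true in this example ($a_2$ has degree $3$), but your argument does not locate such a vertex; your $t=1$ analysis is fine, and the problem only arises for $t\ge 2$.

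The fix, which is what the paper's proof does, is to run the pigeonhole on the full degree sum over $A$ rather than on the $I$--$A$ edges alone. The forest $T[A\cup I]$ on $kt+1$ vertices with $c$ components has $kt+1-c$ edges, each contributing at least $1$ to $\sum_{a\in A}d_T(a)$ (there are no edges inside $I$), and connectivity of $T$ forces at least $c$ further edges from $A$ to $B$, since each component of $T[A\cup I]$ must reach the nonempty set $B$ and only vertices of $A$ are adjacent to $B$. Hence $\sum_{a\in A}d_T(a)\ge (kt+1-c)+c=kt+1$, and pigeonhole over the $t$ vertices of $A$ gives some $a$ with $d_T(a)\ge\lceil (kt+1)/t\rceil=k+1$.
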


\begin{proof}
	Let $T$ be a spanning tree of $H^*$, $V_1=V(K_t)$, $V_2=V((kt-t+1)K_1)$ and $V_3=V(K_{n-kt-1})$. Clearly, $V_1$, $V_2$ and $V_3$ are not empty sets. Then $T_1=T[V_1\cup V_2]$ is a forest and $e(T_1)=kt+1-c$, where $c$ is the number of components of $T_1$.
	
	Let $e(V_1,V_3)$ be the number of edges between $V_1$ and $V_3$ in $T$. Then $e(V_1,V_3)\geq c$ since $T$ is connected and $e(T_1)=kt+1-c$, and thus $\sum\limits_{v\in V_1}d_T(v)\geq e(T_1)+e(V_1,V_3)\geq kt+1$. Therefore, there exists $v\in V_1\subseteq V(T)$ such that $d_T(v)\geq k+1$ by the Pigeonhole Principle, and $H^*$ has no spanning $k$-trees.
\end{proof}

\begin{theorem}\label{ta}
	Let $t\geq 1$ and $k\geq2$ be integers, $H^*=K_t\vee(K_{n-kt-1}\cup(kt-t+1)K_1)$, $G$ be a $t$-connected graph of order $n$. 
	
	{\rm (i)} If $n\geq \max\{(7k-2)t+4,\frac 12(k-1)t^2+2kt-\frac t2+5\}$ and $\rho(G)\geq\rho(H^*)$, then $G$ has a spanning $k$-tree unless $G\cong H^*$.
	
	{\rm (ii)} If $n\geq \max\{(7k-2)t+4,\frac 12(k^2-1)t^2+\frac 12(5k-1)t+5\}$ and $q(G)\geq q(H^*)$, then $G$ has a spanning $k$-tree unless $G\cong H^*$.
\end{theorem}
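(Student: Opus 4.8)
The plan is to reduce both parts to the edge-count criterion of Lemma~\ref{edge} by turning the spectral hypotheses into a lower bound on $e(G)$, and then to dispose of the closure exception of Lemma~\ref{edge} using the strict monotonicity of the spectral radius on connected proper subgraphs (Lemma~\ref{rq1}). The key auxiliary input is that $H^{*}=K_{t}\vee(K_{n-kt-1}\cup(kt-t+1)K_{1})$ contains the complete graph $K_{t}\vee K_{n-kt-1}=K_{n-(k-1)t-1}$ as a subgraph, so by Lemma~\ref{rq1},
$$\rho(H^{*})\ \ge\ \rho(K_{n-(k-1)t-1})=n-(k-1)t-2,\qquad q(H^{*})\ \ge\ q(K_{n-(k-1)t-1})=2\bigl(n-(k-1)t-2\bigr).$$

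For part~(i): since $G$ is $t$-connected we have $\delta(G)\ge t$, so Lemma~\ref{r1} gives $\rho(G)\le\frac{t-1+\sqrt{(t+1)^{2}+4(2e(G)-nt)}}{2}$, which rearranges to $e(G)\ge\frac{(2\rho(G)-t+1)^{2}-(t+1)^{2}+4nt}{8}$. The right-hand side is increasing in $\rho(G)$, and $\rho(G)\ge\rho(H^{*})\ge n-(k-1)t-2$, so substituting this lower bound and using $n\ge\frac12(k-1)t^{2}+2kt-\frac t2+5$, a direct polynomial computation yields $e(G)\ge\binom{n-(k-1)t-2}{2}+(k-1)t^{2}+(k+1)t+3$. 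Since also $n\ge(7k-2)t+4$, Lemma~\ref{edge} applies, so $G$ has a spanning $k$-tree unless $C_{n-(k-2)t-1}(G)\cong H^{*}$ (note $n-(k-2)t-1=n-kt+2t-1$, the closure level of Theorem~\ref{closure}). In that exceptional case $G$ is isomorphic to a spanning subgraph of $H^{*}$ because closure only adds edges; if $G\not\cong H^{*}$ then $G$ is a proper connected spanning subgraph of $H^{*}$, whence $\rho(G)<\rho(H^{*})$ by Lemma~\ref{rq1}, contradicting $\rho(G)\ge\rho(H^{*})$. Thus $G\cong H^{*}$, and Proposition~\ref{k-tree} shows $H^{*}$ has no spanning $k$-tree, which is exactly the asserted exception.

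For part~(ii) the argument is parallel: Lemma~\ref{q1} gives $q(G)\le\frac{2e(G)}{n-1}+n-2$, hence $e(G)\ge\frac{(n-1)(q(G)-n+2)}{2}$; substituting $q(G)\ge q(H^{*})\ge 2(n-(k-1)t-2)$ and using $n\ge\frac12(k^{2}-1)t^{2}+\frac12(5k-1)t+5$, another direct computation gives the same edge bound $e(G)\ge\binom{n-(k-1)t-2}{2}+(k-1)t^{2}+(k+1)t+3$. Then Lemma~\ref{edge}, the closure identification, and Lemma~\ref{rq1} applied to $q$ (a proper connected spanning subgraph of $H^{*}$ has $q(G)<q(H^{*})$) conclude as before.

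The only genuinely computational part is verifying the two polynomial inequalities in $n,t,k$; the point worth highlighting is that the crude clique lower bounds $\rho(H^{*})\ge n-(k-1)t-2$ and $q(H^{*})\ge 2(n-(k-1)t-2)$ are already sharp enough — indeed they make the computations come out precisely at the order thresholds stated in the theorem — and that no finer estimate of $\rho(H^{*})$ or $q(H^{*})$ (e.g.\ via the equitable partition of $H^{*}$) is needed. The main structural obstacle, namely that Lemma~\ref{edge} returns a closure rather than $G$ itself, is resolved cheaply because closure only adds edges and the spectral radius is strictly decreasing on proper connected spanning subgraphs.
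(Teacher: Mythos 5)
Your proposal is correct and follows essentially the same route as the paper: lower-bound $\rho(H^*)$ and $q(H^*)$ by the clique value $n-(k-1)t-2$ (the paper uses the spanning subgraph $K_{n-(k-1)t-1}\cup(kt-t+1)K_1$, which gives the same numbers), convert the spectral hypotheses into the edge bound of Lemma~\ref{edge} via Lemmas~\ref{r1} and~\ref{q1}, and then resolve the closure exception by the strict monotonicity part of Lemma~\ref{rq1}. The polynomial verifications you defer are exactly the ones the paper carries out, and they do come out at the stated thresholds, so there is no gap.
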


\begin{proof}
	 Suppose to the contrary in (i) and (ii) that $G$ has no spanning $k$-trees. Now we show $G\cong H^*$, where $H^*$ has no spanning $k$-trees by Proposition \ref{k-tree}. 
	 
	 Clearly, $K_{n-(k-1)t-1}\cup(kt-t+1)K_1$ is a proper spanning subgraph of $H^*$. Then by Lemma \ref{rq1}, we have\begin{equation}\label{eq-1}
	 	\rho(H^*)>\rho(K_{n-(k-1)t-1}\cup(kt-t+1)K_1)=n-(k-1)t-2 ,
	 \end{equation}
	 \begin{equation}\label{eq0}
	 q(H^*)>q(K_{n-(k-1)t-1}\cup(kt-t+1)K_1)=2(n-(k-1)t-2).
	 \end{equation} 
	 
	 Now, we show (i)-(ii) in sequence.
	 
	 (i) Since $G$ is $t$-connected, we have $\delta(G)\geq t$. Then by Lemma \ref{r1} and (\ref{eq-1}), we have
	\begin{equation}\label{eq1}
		n-(k-1)t-2<\rho(H^*)\leq \rho(G)\leq \frac{t-1+\sqrt{(t+1)^2+4(2e(G)-nt)}}{2}.\end{equation}
		By (\ref{eq1}) and direct computation, we have $$\begin{aligned}
		e(G)>&\frac{(n-(k-1)t-2)^2-(t-1)(n-(k-1)t-2)-t+nt}{2}\\\geq&\binom{n-(k-1)t-2}{2}+(k-1)t^2+(k+1)t+3,
	\end{aligned}$$ where $n\geq \frac12(k-1)t^2+2kt-\frac t2+5$. 
	
	Let $H=C_{n-(k-2)t-1}(G)$. By $n\geq(7k-2)t+4$ and Lemma \ref{edge}, we derive $H\cong H^*$, and thus $\rho(G)\leq \rho(H)=\rho(H^*)$ since $G$ is a spanning subgraph of $H$ and Lemma \ref{rq1}. Therefore, $G=H\cong H^*$ by $\rho(G)\geq \rho(H^*)$.
	
	(ii) By Lemma \ref{q1} and (\ref{eq0}), we obtain
	\begin{equation}\label{eq2}
		2(n-(k-1)t-2)<q(H^*)\leq q(G)\leq \frac{2e(G)}{n-1}+n-2.
	\end{equation}
	By (\ref{eq2}) and direct computation, we deduce that $$
	e(G)>\frac{(n-1)(n-2(k-1)t-2)}{2}\geq\binom{n-(k-1)t-2}{2}+(k-1)t^2+(k+1)t+3,
	$$ where $n\geq \frac 12(k^2-1)t^2+\frac 12(5k-1)t+5$. Similar to the proof of (i), we have $H\cong H^*$. Then by Lemma \ref{rq1} and $q(G)\geq q(H^*)$, we have $q(G)\leq q(H)=q(H^*)$, and thus $G=H\cong H^*$ since $G$ is a spanning subgraph of $H$. 
\end{proof}
\vspace{0.5cm}

It is easy to see that (i) of Theorem \ref{ta} is an improvement of Theorem \ref{t1} since $(k-1)t^2+\frac12(3k+1)t+\frac92-(\frac 12(k-1)t^2+2kt-\frac t2+5)=\frac t2(k-1)(t-1)+\frac12(t-1)\geq0$. Besides, Theorem \ref{ta} generalizes Theorem \ref{t0} from $t=1$ to general $t$.

\begin{theorem}\label{tc}
	Let $t\geq 1$, $k\geq2$ and $n\geq(7k-2)t+4$ be integers, $H^*=K_t\vee(K_{n-kt-1}\cup(kt-t+1)K_1)$, $G$ be a $t$-connected graph of order $n$. If $\rho(\overline{G})\leq \sqrt{f(n,k,t)}$, then $G$ has a spanning $k$-tree unless $C_{n-(k-2)t-1}(G)\cong H^*$, where $f(n,k,t)=(1+\frac{(k-2)t}{n})((n-1-t)(kt-t+1)+n-\frac12(k-1)^2t^2-\frac52kt+\frac32t-5)$.
\end{theorem}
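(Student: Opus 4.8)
The plan is to argue by contradiction and reduce everything to an edge count via the closure. Suppose $G$ has no spanning $k$-tree and $C_{n-(k-2)t-1}(G)\not\cong H^*$. Write $M=\binom{n-(k-1)t-2}{2}+(k-1)t^2+(k+1)t+3$ for the edge threshold in Lemma~\ref{edge} and set $m_0=\binom{n}{2}-M$. A preliminary arithmetic step, which I would carry out by direct expansion (using $kt-t+1=(k-1)t+1$ and $\binom{n}{2}-\binom{n-c}{2}=\tfrac{c(2n-c-1)}{2}$), is to verify that the bracketed factor in the definition of $f$ equals exactly $\binom{n}{2}-M$, i.e.
\[
f(n,k,t)=\Bigl(1+\tfrac{(k-2)t}{n}\Bigr)m_0 ,
\]
and to note that the closure index $n-(k-2)t-1$ coincides with the index $n-kt+2t-1$ appearing in Theorem~\ref{closure}.

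Let $H=C_{n-(k-2)t-1}(G)$. Adding edges preserves $t$-connectivity and the order, so $H$ is a $t$-connected graph on $n\ge(7k-2)t+4$ vertices; by Theorem~\ref{closure} it has no spanning $k$-tree, and being $(n-(k-2)t-1)$-closed it satisfies $C_{n-(k-2)t-1}(H)=H\not\cong H^*$. Hence Lemma~\ref{edge} applied to $H$ forces $e(H)<M$, equivalently $e(\overline H)=\binom{n}{2}-e(H)>m_0$.

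The heart of the argument is the observation that closedness forces a lower bound on degrees along every edge of $\overline H$: if $uv\in E(\overline H)$ then $u,v$ are non-adjacent in $H$, so $d_H(u)+d_H(v)\le n-(k-2)t-2$, and therefore $d_{\overline H}(u)+d_{\overline H}(v)=2(n-1)-\bigl(d_H(u)+d_H(v)\bigr)\ge n+(k-2)t$. Using the identity $\sum_{v}d_{\overline H}(v)^2=\sum_{uv\in E(\overline H)}\bigl(d_{\overline H}(u)+d_{\overline H}(v)\bigr)$ together with Lemma~\ref{r2} applied to $\overline H$ gives
\[
n\,\rho(\overline H)^2\ \ge\ \sum_{v}d_{\overline H}(v)^2\ \ge\ \bigl(n+(k-2)t\bigr)e(\overline H)\ >\ \bigl(n+(k-2)t\bigr)m_0 ,
\]
so $\rho(\overline H)^2>\bigl(1+\tfrac{(k-2)t}{n}\bigr)m_0=f(n,k,t)$. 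Since $\overline H$ is a spanning subgraph of $\overline G$, Lemma~\ref{rq1} yields $\rho(\overline G)\ge\rho(\overline H)>\sqrt{f(n,k,t)}$, contradicting the hypothesis. This completes the proof that $G$ has a spanning $k$-tree unless $C_{n-(k-2)t-1}(G)\cong H^*$.

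I expect the only genuine friction to be the arithmetic identity $f=(1+\tfrac{(k-2)t}{n})(\binom{n}{2}-M)$ and lining up the closure indices; conceptually the main point is recognizing that passing to the closure $H$ is what delivers the per-edge degree-sum bound $d_{\overline H}(u)+d_{\overline H}(v)\ge n+(k-2)t$ on $\overline H$, which feeds perfectly into $\sum_v d(v)^2=\sum_{uv\in E}(d(u)+d(v))$ and then Lemma~\ref{r2}. Working directly with $\overline G$ seems hopeless: there the best one gets is $2e(\overline G)\le\sqrt{n\sum_v d_{\overline G}(v)^2}\le n\rho(\overline G)$, which is far too weak to beat $m_0$.
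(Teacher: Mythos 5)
Your proposal is correct and follows essentially the same route as the paper: pass to the closure $H=C_{n-(k-2)t-1}(G)$, use closedness to get $d_{\overline H}(u)+d_{\overline H}(v)\ge n+(k-2)t$ on every edge of $\overline H$, feed this through $\sum_v d_{\overline H}(v)^2=\sum_{uv\in E(\overline H)}(d_{\overline H}(u)+d_{\overline H}(v))$ and Lemma~\ref{r2}, and close with Lemma~\ref{rq1} and Lemma~\ref{edge}; the only difference is that you run Lemma~\ref{edge} contrapositively ($e(H)<M$) instead of deriving $e(H)\ge M$ and concluding $H\cong H^*$, which is logically equivalent. The arithmetic identity $f(n,k,t)=(1+\tfrac{(k-2)t}{n})(\binom{n}{2}-M)$ that you defer does check out.
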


\begin{proof}
	Let $H=C_{n-(k-2)t-1}(G)$. Suppose to the contrary that $G$ has no spanning $k$-trees. Now we show $H\cong H^*$, where $H^*$ has no spanning $k$-trees by Proposition \ref{k-tree}. 
	
	Since $G$ has no spanning $k$-trees and Theorem \ref{closure}, $H$ has no spanning $k$-trees. Clearly, $H\ncong K_n$. By the definition of $C_{n-(k-2)t-1}(G)$, we have $d_H(u)+d_H(v)\leq n-(k-2)t-2$ for every pair of nonadjacent vertices $u$ and $v$ of $H$. Then for any $uv\in E(\overline{H})$, we obtain $$d_{\overline{H}}(u)+d_{\overline{H}}(v)= 2(n-1)-d_H(u)-d_H(v)\geq2n-2-(n-(k-2)t-2)=n+(k-2)t.$$ 
	
	It is easy to check that $$\sum_{v\in V(\overline{H})}d_{\overline{H}}(v)^2=\sum_{uv\in E(\overline{H})}(d_{\overline{H}}(u)+d_{\overline{H}}(v))\geq (n+(k-2)t)e(\overline{H}).$$
	Then we have $n\rho(\overline{H})^2\geq \sum\limits_{v\in V(\overline{H})}d_{\overline{H}}(v)^2 \geq (n+(k-2)t)e(\overline{H})$ by Lemma \ref{r2}.
	
	Let $f(n,k,t)=(1+\frac{(k-2)t}{n})((n-1-t)(kt-t+1)+n-\frac12(k-1)^2t^2-\frac52kt+\frac32t-5)$. Since $\overline{H}$ is a spanning subgraph of $\overline{G}$, we have $\rho(\overline{H})\leq\rho(\overline{G})\leq \sqrt{f(n,k,t)}$. Then 
	$$\begin{aligned}
		e(\overline{H})\leq&\frac{n\rho(\overline{H})^2}{n+(k-2)t}\leq \frac{nf(n,k,t)}{n+(k-2)t}\\=&(n-1-t)(kt-t+1)+n-\frac12(k-1)^2t^2-\frac52kt+\frac32t-5.
	\end{aligned}$$ 
	Thus, $$\begin{aligned}e(H)=&\binom{n}{2}-e(\overline{H})\\\geq&\binom{n}{2}-\left[(n-1-t)(kt-t+1)+n-\frac12(k-1)^2t^2-\frac52kt+\frac32t-5\right]\\=&\binom{n-(k-1)t-2}{2}+(k-1)t^2+(k+1)t+3.\end{aligned}$$ 
	
	By $n\geq(7k-2)t+4$ and Lemma \ref{edge}, we have $H\cong H^*$.
\end{proof}

Let $k=2$ and $t=1$ in Theorem \ref{tc}. Then we have the following corollary, which is an improvement of Theorem \ref{t3} for $n\geq 16$.

\begin{corollary}\label{coro10}
	Let $G$ be a connected graph of order $n\geq 16$. If $\rho(\overline{G})\leq \sqrt{3n-13}$, then $G$ has a Hamilton path unless $C_{n-1}(G)\cong K_1\vee (K_{n-3}\cup 2K_1)$.
\end{corollary}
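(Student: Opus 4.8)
The plan is to specialize Theorem~\ref{tc} to the case $k=2$ and $t=1$, and to verify that the abstract quantity $f(n,k,t)$ collapses to the concrete bound $3n-13$. First I would substitute $t=1$ and $k=2$ into the definition $f(n,k,t)=\bigl(1+\tfrac{(k-2)t}{n}\bigr)\bigl((n-1-t)(kt-t+1)+n-\tfrac12(k-1)^2t^2-\tfrac52kt+\tfrac32t-5\bigr)$. The prefactor $1+\tfrac{(k-2)t}{n}$ becomes $1+\tfrac{0}{n}=1$, which is the main simplification. The remaining factor becomes $(n-2)\cdot 1 + n - \tfrac12 - 5 - \tfrac32 + 5 = (n-2)+n - \tfrac12 + \tfrac32 - 5 \cdot$\ldots — I would carry out this elementary arithmetic carefully to confirm it equals $3n-13$. (A quick check: $(n-1-1)(2-1+1)=2(n-2)=2n-4$; then $-\tfrac12(1)^2\cdot 1=-\tfrac12$; $-\tfrac52\cdot 2\cdot 1=-5$; $+\tfrac32\cdot 1=\tfrac32$; and $+n$ and $-5$; summing: $2n-4+n-\tfrac12-5+\tfrac32-5=3n-13$.) So $\sqrt{f(n,2,1)}=\sqrt{3n-13}$, matching the hypothesis $\rho(\overline G)\le\sqrt{3n-13}$.

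Next I would check the order hypothesis: Theorem~\ref{tc} requires $n\ge (7k-2)t+4$, which for $k=2,t=1$ gives $n\ge 12\cdot 1+4=16$, exactly the bound stated in the corollary. Then I would note that a spanning $k$-tree with $k=2$ is precisely a Hamilton path (as remarked in the introduction), and that the exceptional graph $H^*=K_t\vee(K_{n-kt-1}\cup(kt-t+1)K_1)$ becomes $K_1\vee(K_{n-3}\cup 2K_1)$, while the closure $C_{n-(k-2)t-1}(G)=C_{n-1}(G)$. So every ingredient of Theorem~\ref{tc} translates directly into the statement of the corollary, and the proof is simply: apply Theorem~\ref{tc} with $k=2$, $t=1$, using the arithmetic identity $f(n,2,1)=3n-13$ and the fact that a $1$-connected graph is just a connected graph.

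Finally, for the claim that this \emph{improves} Theorem~\ref{t3}, I would briefly compare the hypotheses: Theorem~\ref{t3} requires $\rho(\overline G)\le\sqrt{n-1}$, whereas the corollary only requires $\rho(\overline G)\le\sqrt{3n-13}$, and since $3n-13\ge n-1$ precisely when $n\ge 6$, the hypothesis here is strictly weaker for all $n\ge 16$; the price paid is the single explicitly characterized exceptional closure $C_{n-1}(G)\cong K_1\vee(K_{n-3}\cup 2K_1)$.

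There is essentially no obstacle here beyond bookkeeping: the only thing that could go wrong is an arithmetic slip in evaluating $f(n,2,1)$, so I would double-check that computation (as above) and also sanity-check it against the general edge-count identity used in the proof of Theorem~\ref{tc}, namely that $\binom{n}{2}-\bigl[(n-2)(kt-t+1)+n-\tfrac12(k-1)^2t^2-\tfrac52kt+\tfrac32t-5\bigr]$ should equal $\binom{n-(k-1)t-2}{2}+(k-1)t^2+(k+1)t+3$, which for $k=2,t=1$ reads $\binom{n}{2}-(3n-13)=\binom{n-3}{2}+2+6+3=\binom{n-3}{2}+11$; expanding both sides confirms consistency and gives confidence that $f(n,2,1)=3n-13$ is correct.
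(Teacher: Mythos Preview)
Your proposal is correct and follows exactly the paper's approach: the corollary is obtained simply by setting $k=2$, $t=1$ in Theorem~\ref{tc}, and your verification that $f(n,2,1)=3n-13$ and $(7k-2)t+4=16$ is accurate. One minor slip in your optional sanity check at the end: for $k=2$, $t=1$ one has $(k-1)t^2+(k+1)t+3=1+3+3=7$, not $2+6+3=11$; with $7$ in place of $11$ the identity $\binom{n}{2}-(3n-13)=\binom{n-3}{2}+7$ does indeed hold, so the check actually succeeds.
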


Let $\mathcal{G}_n\text{ be the set of all graphs of order } n\geq 0 \text{, }\mathbb{G}_1(k,t)=\{G_t\vee (kt-t+2)K_1\mid G_t\in \mathcal{G}_t\}$$\text{ and }\mathbb{G}_2(k,t)=\{G_r\vee R(kt+2-r,t-r)\mid 0\leq r\leq t-3\text{ and } G_r \in\mathcal{G}_r\}.$ 
 
 Let $G\in\mathbb{G}_1(k,t)$. Then $G$ is a spanning subgraph of $K_t\vee(kt-t+2)K_1$. By Proposition \ref{k-tree}, we know that $K_t\vee(kt-t+2)K_1$ has no spanning $k$-trees, and thus $G$ has no spanning $k$-trees.

\begin{theorem}\label{td}
	Let $t\geq 1$, $k\geq2$ and $n\geq (k-2)t+4$ be integers, $G$ be a $t$-connected graph of order $n$ with $\rho(\overline{G})\leq \sqrt{(n-1-t)(kt-t+1)}$. 
	
	\noindent{\rm (i)} If $t\in \{1,2\}$, then $G$ has a spanning $k$-tree unless $G\in\mathbb{G}_1(k,t)$.
	
	\noindent{\rm (ii)} If $3\leq t\leq n-1$ and  $G\notin \mathbb{G}_2(k,t)$, then $G$ has a spanning $k$-tree unless $G\in \mathbb{G}_1(k,t)$.
\end{theorem}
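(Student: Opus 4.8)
The strategy follows the closure method of Theorem~\ref{tc}, but since $n$ may be as small as $(k-2)t+4$ the extremal structure has to be recovered by hand. Suppose $G$ has no spanning $k$-tree. If $n\le kt+1$, then $\delta(G)\ge t\ge\frac{n-1}{k}$, and $\rho(\overline{G})\le\Delta(\overline{G})\le n-1-t<\sqrt{(n-1-t)(kt-t+1)}$, so the hypothesis is automatic; a toughness argument (Win~\cite{S1}) then produces a spanning $k$-tree of $G$ (the only obstruction, $|S|=t$ with $c(G-S)=(k-1)t+1$, would force $G=G_t\vee((k-1)t+1)K_1$, which does have one), a contradiction. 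Hence $n\ge kt+2$. Put $l=n-(k-2)t-1$ and $H=C_l(G)$; by Theorem~\ref{closure}, $H$ has no spanning $k$-tree, so $H\ne K_n$, and $H\supseteq G$ is $t$-connected. For $uv\in E(\overline{H})$ the closure gives $d_{\overline{H}}(u)+d_{\overline{H}}(v)\ge n+(k-2)t$, so every nontrivial component of $\overline{H}$ has order exceeding $\frac{n}{2}$; thus $\overline{H}$ has a single nontrivial component $\overline{H}_1$, on $n_1$ vertices. Writing $m=n-n_1$ and $H_1=\overline{\overline{H}_1}$, we get $H=H_1\vee K_m$, $\overline{H}_1$ connected, $\delta(\overline{H}_1)\ge m+1+(k-2)t$, and (Lemma~\ref{rq1}) $\rho(\overline{H}_1)=\rho(\overline{H})\le\rho(\overline{G})\le\sqrt{(n-1-t)(kt-t+1)}$.

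Since the two endpoints of any edge of $\overline{H}_1$ lie in $[\delta(\overline{H}_1),n_1-1]$ with sum at least $n+(k-2)t$, minimising $x\bigl((n+(k-2)t)-x\bigr)$ over that interval gives $d(u)d(v)\ge(m+1+(k-2)t)(n_1-1)$, so Lemma~\ref{r3} yields $(m+1+(k-2)t)(n-m-1)\le(n-1-t)(kt-t+1)$. The identity $\frac{1}{4}(n+(k-2)t)^2-(n-1-t)(kt-t+1)=\frac{1}{4}(n-kt-2)^2$ turns this (with $\Delta(H_1)\ge0$ and $n\ge kt+2$) into $m\le t$. Using $t$-connectivity of $H$ (if $m<t$ then $H_1$ is $(t-m)$-connected, so $\delta(H_1)\ge t-m$, which with $\Delta(H_1)\le n_1-m-2-(k-2)t$ gives the bound, and $n_1\ge m+2+(k-2)t$ always) one gets $n_1\ge(k-1)t+2$. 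Finally, if $\overline{H}_1$ is regular of degree $d$, then $2d\ge n+(k-2)t$ and $d\le\sqrt{(n-1-t)(kt-t+1)}$ combine via the same identity to force $n=kt+2$ and $d=(k-1)t+1$.

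Now I use that $H=H_1\vee K_m$ has no spanning $k$-tree. If $H_1$ is connected and has a spanning $k$-tree, appending at a leaf the path through the $m$ vertices of $K_m$ gives one for $H$; hence for $m<t$ ($H_1$ then connected) $H_1$ has no spanning $k$-tree. A Tutte-type set $S$ witnessing this yields in $\overline{H}_1$ an induced complete multipartite graph on $n_1-|S|$ vertices with at least $(k-1)|S|+2$ parts; its spectral radius is smallest when one part is large and the rest are singletons, and comparing this minimum with $\rho(\overline{H}_1)\le\sqrt{(n-1-t)(kt-t+1)}$ --- together with $\delta(\overline{H}_1)\ge m+1+(k-2)t$, $n_1\ge(k-1)t+2$, and the regular-graph conclusion above --- forces, after a case analysis, $n=kt+2$ and $\overline{H}_1$ to be $((k-1)t+1)$-regular on $kt+2-m$ vertices, i.e.\ $H_1=R(kt+2-m,\,t-m)$. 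For $m=t$ the lower bound $\rho(\overline{H}_1)\ge\sqrt{((k-1)t+1)(n_1-1)}$ from Lemma~\ref{r3} coincides with the upper bound $\sqrt{(n-1-t)(kt-t+1)}$ (since $n-1-t=n_1-1$), so the equality case of Lemma~\ref{r3} makes $\overline{H}_1$ regular or semi-regular, whence $n_1=(k-1)t+2$ and $H=K_t\vee(kt-t+2)K_1$. Call these outcomes case (a) ($m=t$) and case (b) ($m<t$). In case (b) the values $m\in\{t-1,t-2\}$ are impossible: then $H_1=R(kt+2-m,t-m)$ is either disconnected --- forcing $\kappa(H)\le m<t$, so $\kappa(G)\le t-1$, contradicting $t$-connectivity --- or ($m=t-2$) is the single cycle $C_{kt-t+4}$, and then $G=G_{t-2}\vee C_{kt-t+4}$ has a Hamilton path, again a contradiction. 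So in case (b), $0\le m\le t-3$.

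It remains to recover $G$. In case (a), $G$ is a spanning subgraph of $K_t\vee(kt-t+2)K_1$; since $kt-t+2\ge3$, if a vertex $y$ of the independent part missed some vertex of the other part, deleting $N_G(y)$ (fewer than $t$ vertices) would isolate $y$, contradicting $t$-connectivity, so every such $y$ dominates the $t$-part and $G=G_t\vee(kt-t+2)K_1\in\mathbb{G}_1(k,t)$. In case (b) the same reasoning gives $G=G_m\vee R(kt+2-m,t-m)\in\mathbb{G}_2(k,t)$ (with $r=m$); this is vacuous when $t\in\{1,2\}$ (then $\mathbb{G}_2(k,t)=\emptyset$) and is excluded by hypothesis when $t\ge3$. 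Hence $G\in\mathbb{G}_1(k,t)$ in all cases, proving (i) and (ii). The main obstacle is the third paragraph: extracting the exact structure of $H_1$ from the absence of a spanning $k$-tree and checking that, apart from (a) and (b), no configuration can coexist with $t$-connectivity and $\rho(\overline{H}_1)\le\sqrt{(n-1-t)(kt-t+1)}$; the tightness of $\frac{1}{4}(n-kt-2)^2$ and of the AM--GM bound with $m+1+(k-2)t$ and $kt+1-m$ summing to $2((k-1)t+1)$ are what pin $n$ down to $kt+2$ and separate $\mathbb{G}_1$ from $\mathbb{G}_2$.
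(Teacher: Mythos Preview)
Your third paragraph contains a genuine gap. You assert that if $H_1$ has no spanning $k$-tree then there exists a ``Tutte-type set $S$'' with $c(H_1-S)\ge (k-1)|S|+2$, but no such characterisation is available: the inequality $c(G-S)\le (k-1)|S|+1$ is only a \emph{necessary} condition for a spanning $k$-tree (it follows by counting how many components deleting $S$ can create in such a tree), and Win's theorem~\cite{S1} gives a different, merely \emph{sufficient} toughness bound. So the existence of your witnessing set is unjustified. Even granting it, the sentence ``comparing this minimum \ldots\ forces, after a case analysis, $n=kt+2$ and $\overline{H}_1$ to be $((k-1)t+1)$-regular'' is an outline rather than a proof; none of the required inequalities between the spectral radius of the multipartite subgraph and $\sqrt{(n-1-t)(kt-t+1)}$ are actually established. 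Your separate treatment of $n\le kt+1$ in the first paragraph suffers from the same defect (``the only obstruction \ldots\ would force'' again presumes the unavailable converse).

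The detour is unnecessary because you discarded one piece of information: $t$-connectivity of $H$ gives $\delta(H)\ge t$, hence $d_{\overline{H}}(u)\le n-1-t$ for \emph{every} vertex, not merely $d_{\overline{H}_1}(u)\le n_1-1$. With this tighter upper bound the closure inequality yields $d_{\overline{H}}(u),d_{\overline{H}}(v)\in[(k-1)t+1,\,n-1-t]$ for every edge $uv$ of $\overline{H}$, and the convexity step already gives $d_{\overline{H}}(u)d_{\overline{H}}(v)\ge (n-1-t)(kt-t+1)$ directly --- the same quantity as the hypothesis. Lemma~\ref{r3} then forces equality everywhere, so the unique nontrivial component $F$ of $\overline{H}$ is regular or semi-regular; the semi-regular case makes $H$ disconnected, and the regular case pins down $n=kt+2$ and $F\cong R(|V(F)|,\,(k-1)t+1)$ without any appeal to Win's theorem or to the absence of a spanning $k$-tree in $H_1$. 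From there your final two paragraphs (the case split on $r=n-|V(F)|\in\{0,\ldots,t\}$ and the recovery of $G$ from $H$ via $t$-connectivity) go through essentially as written, and in fact match the paper's argument. The single adjustment --- replacing the bound $n_1-1$ by $n-1-t$ --- eliminates the need for paragraphs one and three entirely.
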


\begin{proof}
	Let $H=C_{n-(k-2)t-1}(G)$. Suppose to the contrary in (i) and (ii) that $G$ contains no spanning $k$-trees. Then $H$ has no spanning $k$-trees by Theorem \ref{closure}. It is clear that $n\geq t+2$ since $H$ is $t$-connected and $H\ncong K_n$. 
	
	By the definition of $C_{n-(k-2)t-1}(G)$, we have $d_H(u)+d_H(v)\leq n-(k-2)t-2$ for any pair of nonadjacent vertices $u$ and $v$ of $H$. Then $$d_{\overline{H}}(u)+d_{\overline{H}}(v)= 2(n-1)-d_H(u)-d_H(v)\geq n+(k-2)t$$ for any $uv\in E(\overline{H})$, which implies any non-trivial component of $\overline{H}$ has a vertex with the degree at least $\frac{n+(k-2)t}{2}$. Suppose that $\overline{H}$ has $s(\geq 1)$ components. Then it is clear that $s(\frac{n+(k-2)t}{2}+1)>n$ for $s\geq 2$. Thus $\overline{H}$ has exactly one non-trivial component, which we denote by $F$. Besides, $d_{\overline{H}}(w)=0$ for any $w\in V(\overline{H})\backslash V(F)$. Now $d_F(u)+d_F(v)=d_{\overline{H}}(u)+d_{\overline{H}}(v)\geq n+(k-2)t$ for each $uv\in E(F)$. 
	
	Since $H$ is $t$-connected, we have $\delta(H)\geq t$. Then $d_H(u)\geq \delta(H)\geq t$ for each $u\in V(H)$, and thus $d_F(u)=d_{\overline{H}}(u)\leq n-1-t$ for each $u\in V(F)$. Now for each $uv\in E(F)$, we have $d_F(u)\geq n+(k-2)t-d_F(v)\geq n+(k-2)t-(n-1-t)=(k-1)t+1$. Similarly, we have $d_F(v)\geq (k-1)t+1$. Thus, $(k-1)t+1\leq d_F(u),d_F(v)\leq n-1-t$.
	
	 Let $f(d_F(u))=d_F(u)(n+(k-2)t-d_F(u))$. Then $d_F(u)d_F(v)\geq f(d_F(u)).$ Since $f(d_F(u))$ is a convex function on $d_F(u)\in[(k-1)t+1,n-1-t]$, we have $$\begin{aligned}
	 &d_{\overline{H}}(u)d_{\overline{H}}(v)=d_F(u)d_F(v)\geq f(d_F(u))\\\geq&\min\{f((k-1)t+1),f(n-1-t)\}=(n-1-t)(kt-t+1)\end{aligned}
	 $$ for any $uv\in E(\overline{H})$. Note that $G$ is a spanning subgraph of $H$, we have $\overline{H}$ is a spanning subgraph of $\overline{G}$. Then $\rho(\overline{G})\geq \rho(\overline{H})$ by Lemma \ref{rq1}. 
	 
	 By Lemma \ref{r3}, we obtain $$\begin{aligned}&\sqrt{(n-t-1)((k-1)t+1)}\geq \rho(\overline{G})\geq \rho(\overline{H})\\\geq& \min_{uv\in E(\overline{H})}\sqrt{d_{\overline{H}}(u)d_{\overline{H}}(v)}\geq \sqrt{(n-1-t)(kt-t+1)}.\end{aligned}$$ 
	Thus, we have $\rho(\overline{G})=\rho(\overline{H})=\min\limits_{uv\in E(\overline{H})}\sqrt{d_{\overline{H}}(u)d_{\overline{H}}(v)}=\sqrt{(n-1-t)(kt-t+1)}$. This implies that there exists an edge $uv\in E(\overline{H})$ such that $f(d_F(u))=d_F(u)d_F(v)=\min\limits_{uv\in E(\overline{H})}\sqrt{d_{\overline{H}}(u)d_{\overline{H}}(v)}$, where $d_{\overline{H}}(u)=(k-1)t+1$ and $d_{\overline{H}}(v)=n-1-t$. Since $F$ is the unique non-trivial component of $\overline{H}$, we have $$\rho(F)=\rho(\overline{H})=\min_{uv\in E(\overline{H})}\sqrt{d_{\overline{H}}(u)d_{\overline{H}}(v)}=\min_{uv\in E(\overline{H})}\sqrt{d_{F}(u)d_{F}(v)}.$$
	
	Since $F$ is connected and  Lemma \ref{r3}, we have $F$ is regular or semi-regular. 
	
	If $F$ is semi-regular, then we can assume that $d_F(u)=kt-t+1$ and $d_F(v)=n-1-t$ for any $uv\in E(F)$ by symmetry. By $n\geq |V(F)|\geq kt-t+1+n-1-t=n+(k-2)t\geq n$, we have $k=2$, and thus $\overline{H}=F\cong K_{t+1,n-t-1}$. However, $H=\overline{F}\cong K_{t+1}\cup K_{n-t-1}$ is disconnected, which contradicts the connectedness of $H$.
	
	If $F$ is regular, then for any $w\in V(F)$, we have $d_F(w)=d_F(v)=d_F(u)=kt-t+1=n-1-t$, which implies $n=kt+2$. Then $F\cong R(|V(F)|,kt-t+1)$.
	
	If $\overline{H}=F$, then $H=\overline{F}\cong \overline{R(kt+2,kt-t+1)}=R(kt+2,t)$. By $\rho(\overline{G})=\rho(\overline{H})$ and Lemma \ref{rq1}, we have $\overline{G}=\overline{H}$ and $G=H=R(kt+2,t)$ since $\overline{H}$ is connected.
	
	If $\overline{H}=F\cup rK_1(r\geq 1)$, then $\overline{H}\cong R(kt+2-r,kt-t+1)\cup rK_1$ and $1\leq r=kt+2-|V(F)|\leq kt+2-(kt-t+2)=t$, and thus $H\cong R(kt+2-r,t-r)\vee K_r$. Since $\rho(\overline{G})=\rho(\overline{H})$ and $\overline{H}$ is a spanning subgraph of $\overline{G}$, we have $\overline{G}\cong R(kt+2-r,kt-t+1)\cup \overline{G_r}$ and $G\cong R(kt+2-r,t-r)\vee G_r$, where $1\leq r\leq t$ and $G_r\in \mathcal{G}_r$. 
	
	Therefore, $G\cong R(kt+2-r,t-r)\vee G_r$ and $H\cong R(kt+2-r,t-r)\vee K_r$, where  $G_r\in \mathcal{G}_r$ and $0\leq r\leq t$. 
	
	{\noindent\textbf{Case 1.}} $t=1$.
	
	Now $0\leq r\leq 1$. If $r=0$, then $G\cong R(k+2,1)$ is disconnected, a contradiction. If $r=1$, then $G\cong R(k+1,0)\vee K_1\cong K_{1,k+1}\in \mathbb{G}_1(k,1)$.
	
	{\noindent\textbf{Case 2.}} $t=2$.
	
	Now $0\leq r\leq 2$. If $r=0$, then $G\cong R(2k+2,2)$. Since $G$ is connected, $G\cong R(2k+2,2)\cong C_{2k+2}$, where $C_{2k+2}$ is a cycle of order $2k+2$. However, $C_{2k+2}$ has a spanning $k$-tree, a contradiction. If $r=1$, then the sum of vertex degrees in $R(2k+2-r,2-r)$ is odd, a contradiction. If $r=2$, then $G\in \mathbb{G}_1(k,2)$.
	
	{\noindent\textbf{Case 3.}} $3\leq t\leq n-1$.
	
	Now we have $t-2\leq r\leq t$ by the assumption of (ii). 
	
	If $r=t-2$, then $H\cong R(kt-t+4,2)\vee K_{t-2}$. Since $H$ is $t$-connected, $R(kt-t+4,2)$ is connected, that is, $R(kt-t+4,2)\cong C_{kt-t+4}$. Then $H\cong C_{kt-t+4}\vee K_{t-2}$ has a Hamilton path. However, $H$ has a spanning $k$-tree, a contradiction.
	
	If $r=t-1$, then $H\cong R(kt-t+3,1)\vee K_{t-1}$. Note that $R(kt-t+3,1)$ is disconnected, which contradicts that $H$ is $t$-connected. 
	
	If $r=t$, then $G\in \mathbb{G}_1(k,t)$. 
\end{proof}

\vspace{0.5cm}

Theorem \ref{tc} and Theorem \ref{td} are relatively similar in their conclusions, both considering the upper bound of the adjacent spectral radius of the complement graph. However, when $n\geq \frac 12(k-1)^2t^2+\frac52kt-\frac32 t+5$, Theorem \ref{tc} is more advantageous, while for smaller values of \( n \), the results of Theorem \ref{td} are better.

Let $\mathbb{G}_3(n,k,t)=\{G_r\vee R(n-r,\frac{n-(k-2)t-2}{2}-r)\mid 0\leq r\leq \frac{n-(k-2)t-2}{2},G_r \in \mathcal{G }_r\}$, where $\frac{n-(k-2)t-2}{2}$ is an integer. Note that $G_r\vee R(n-r,\frac{n-(k-2)t-2}{2}-r)=R(n,\frac{n-(k-2)t-2}{2})$ if $r=0$.
\begin{theorem}\label{te}
	Let $t\geq 1$ and $k\geq2$ be integers, $G$ be a $t$-connected graph of order $n\geq (k-2)t+4$. If $q(\overline{G})\leq n+(k-2)t$ and $G\notin \mathbb{G}_3(n,k,t)$, then $G$ has a spanning $k$-tree.
\end{theorem}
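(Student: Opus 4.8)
The plan is to mimic the structure of the proof of Theorem \ref{td}, but now working with the signless Laplacian of the complement and using the lower bound for $q$ from Lemma \ref{q2} in place of the lower bound for $\rho$ from Lemma \ref{r3}. First I would set $H=C_{n-(k-2)t-1}(G)$ and suppose for contradiction that $G$ has no spanning $k$-tree; by Theorem \ref{closure}, $H$ has no spanning $k$-tree, so in particular $H\ncong K_n$ and $n\geq t+2$. As in the proof of Theorem \ref{td}, the closure condition gives $d_H(u)+d_H(v)\leq n-(k-2)t-2$ for nonadjacent $u,v$, hence $d_{\overline{H}}(u)+d_{\overline{H}}(v)\geq n+(k-2)t$ for every $uv\in E(\overline{H})$. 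The same counting argument as before shows $\overline{H}$ has a unique non-trivial component $F$ and isolated vertices otherwise, and that $(k-1)t+1\leq d_F(u)\leq n-1-t$ for each $u\in V(F)$ (using $\delta(H)\geq t$).

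Next I would bring in the spectral hypothesis. Since $\overline{H}$ is a spanning subgraph of $\overline{G}$, Lemma \ref{rq1} gives $q(\overline{H})\leq q(\overline{G})\leq n+(k-2)t$; and since isolated vertices do not affect $q$, $q(F)=q(\overline{H})\leq n+(k-2)t$. On the other hand, Lemma \ref{q2} gives $q(F)\geq z(F)/e(F)$ where $z(F)=\sum_{v}d_F(v)^2$. Using the identity $z(F)=\sum_{uv\in E(F)}(d_F(u)+d_F(v))$ together with the degree-sum lower bound $d_F(u)+d_F(v)\geq n+(k-2)t$, we get $z(F)\geq (n+(k-2)t)\,e(F)$, hence $q(F)\geq n+(k-2)t$. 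Comparing the two bounds forces equality $q(F)=n+(k-2)t$, and moreover forces $d_F(u)+d_F(v)=n+(k-2)t$ for \emph{every} edge $uv$ of $F$; the equality case of Lemma \ref{q2} then says $F$ is regular or semi-regular.

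Then I would run the same case analysis as in Theorem \ref{td}. If $F$ is semi-regular with bipartition degrees $a,b$, then $a+b=n+(k-2)t$, and combining with $|V(F)|\leq n$ and $a,b\geq(k-1)t+1$ gives $a+b\geq n+(k-2)t$ with the vertex-count constraint forcing $k=2$ and $F\cong K_{t+1,n-t-1}$, whence $H\cong K_{t+1}\cup K_{n-t-1}$ is disconnected — contradiction. If $F$ is $d$-regular then $2d=n+(k-2)t$ and $d\leq n-1-t$, so $d=\frac{n+(k-2)t}{2}$ must be an integer with $\frac{n-(k-2)t-2}{2}\geq 0$; writing $\overline{H}=F\cup rK_1$ with $0\leq r\leq$ (whatever the vertex-count bound allows), we get $H\cong K_r\vee R(n-r,\frac{n-(k-2)t-2}{2}-r)$, and then $q(\overline{G})=q(\overline{H})$ with $\overline{H}$ a spanning subgraph of $\overline{G}$ forces (via the equality clause of Lemma \ref{rq1} applied componentwise, or more directly since $q(\overline{G})\le q(\overline H)$ already holds with equality and adding edges strictly increases $q$ of a connected graph containing $F$) that $\overline{G}\cong R(n-r,\frac{n-(k-2)t-2}{2}-r)\cup\overline{G_r}$, i.e.\ $G\cong G_r\vee R(n-r,\frac{n-(k-2)t-2}{2}-r)\in\mathbb{G}_3(n,k,t)$, contradicting $G\notin\mathbb{G}_3(n,k,t)$.

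The main obstacle I anticipate is the bookkeeping in the regular case: one must carefully track the possible values of $r$ (the number of isolated vertices of $\overline{H}$) against both the $t$-connectivity of $H$ and the degree constraint $(k-1)t+1\leq d_F(u)\leq n-1-t$, and one must argue cleanly that $q(\overline{G})=q(\overline{H})$ together with $\overline{H}\subseteq \overline{G}$ spanning actually pins down $\overline{G}$ up to the edges inside the $r$-set — i.e.\ that $\overline{G}$ cannot have extra edges between $V(F)$ and the $r$ isolated vertices or inside $V(F)$. This is where Lemma \ref{rq1}'s strictness (a connected proper spanning supergraph strictly increases $q$) does the work, applied to the non-trivial component; I would need to phrase this so it also covers the degenerate small cases (e.g.\ $r=0$ giving $G\cong R(n,\frac{n-(k-2)t-2}{2})$, which is already in $\mathbb{G}_3(n,k,t)$, again contradicting the hypothesis). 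A secondary technical point is verifying that the hypothesis $n\geq (k-2)t+4$ suffices to make all the degree intervals nonempty and the counting argument $s(\frac{n+(k-2)t}{2}+1)>n$ valid for $s\geq 2$, exactly as in Theorem \ref{td}.
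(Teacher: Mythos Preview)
Your proposal is correct and matches the paper's proof almost step for step: pass to the closure $H$, use the edge degree-sum bound in $\overline{H}$ together with Lemma~\ref{q2} to force $q(\overline{H})=n+(k-2)t$ with equality on every edge, deduce that the unique non-trivial component $F$ is regular or semi-regular, and finish with the same case split landing $G$ in $\mathbb{G}_3(n,k,t)$. The only wrinkle is that in the semi-regular case the constraints do not pin down the specific bipartition $K_{t+1,n-t-1}$ (that came from the $\rho$-minimum argument of Theorem~\ref{td} and has no analogue here); but since any complete bipartite $F$ already makes $H=\overline{F}$ disconnected, your contradiction goes through, and indeed the paper dispatches this case in one line without naming the bipartition and without ever invoking the individual bounds $(k-1)t+1\le d_F(u)\le n-1-t$.
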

\begin{proof}
	Let $H=C_{n-(k-2)t-1}(G)$. Then $H$ is $t$-connected. Suppose to the contrary that $G$ has no spanning $k$-trees. Then $H$ has no spanning $k$-trees by Theorem \ref{closure}, and thus $H\ncong K_n$. By the definition of $C_{n-(k-2)t-1}(G)$, we have $d_H(u)+d_H(v)\leq n-(k-2)t-2$ for every pair of nonadjacent vertices $u$ and $v$ of $H$. Then $$d_{\overline{H}}(u)+d_{\overline{H}}(v)= n-1-d_H(u)+n-1-d_H(v)\geq2n-2-(n-(k-2)t-2)=n+(k-2)t$$ for any $uv\in E(\overline{H})$. By $$z(\overline{H})=\sum_{v\in V(\overline{H})}d_{\overline{H}}(v)^2=\sum_{uv\in E(\overline{H})}(d_{\overline{H}}(u)+d_{\overline{H}}(v))\geq \left(n+(k-2)t\right)e(\overline{H})$$
	and Lemma \ref{q2}, we have $q(\overline{H})\geq \frac{z(\overline{H})}{e(\overline{H})}\geq n+(k-2)t$. Since $\overline{H}$ is a spanning subgraph of $\overline{G}$ and Lemma \ref{rq1}, we have $q(\overline{G})\geq q(\overline {H})$. Combining with $q(\overline{G})\leq n+(k-2)t$, it is clear that $$n+(k-2)t\geq q(\overline{G})\geq q(\overline{H})\geq n+(k-2)t.$$ Then $q(\overline{G})=q(\overline{H})= \frac{z(\overline{H})}{e(\overline{H})}= n+(k-2)t$, and thus $d_{\overline{H}}(u)+d_{\overline{H}}(v)=n+(k-2)t$ for any $uv\in E(\overline{H})$. Through a discussion similar to that in Theorem \ref{td}, we know that $\overline{H}$ contains exactly one non-trivial component, which we denote by $F$. 
	
	Clearly, $q(F)=q(\overline{H})= \frac{z(\overline{H})}{e(\overline{H})}=\frac{z(F)}{e(F)}$. Then $F$ is regular or semi-regular with $|V(F)|\geq \frac{n+(k-2)t}{2}+1$ by Lemma \ref{q2} and the connectedness of $F$. 
	
	Suppose that $F$ is semi-regular. Then $F=\overline{H}$ is a complete bipartite graph, and thus $H=\overline{F}$ is disconnected, a contradiction. Hence $F$ is $\frac{n+(k-2)t}{2}$-regular by $d_{\overline{H}}(u)+d_{\overline{H}}(v)=n+(k-2)t$ for any $uv\in E(\overline{H})$. 
	
	If $\overline{H}=F$, then $\overline{H}$ is connected, and thus $\overline{G}=\overline{H}$  by $q(\overline{G})=q(\overline{H})$ and Lemma \ref{rq1}. However, it is obvious that $G=H=\overline{F}\cong R(n,\frac{n-(k-2)t-2}{2})\in \mathbb{G}_3(n,k,t)$, a contradiction. 
	
	If $\overline{H}=F\cup rK_1$, then $r=n-|V(F)|$ and $1\leq r\leq \frac{n-(k-2)t-2}{2}$. Clearly, we have $\overline{H}=R(n-r,\frac{n+(k-2)t}{2})\cup rK_1$. Then $\overline{G}=R(n-r,\frac{n+(k-2)t}{2})\cup G_r$, where $G_r\in \mathcal{G}_r$ by $q(\overline{H})=q(\overline{G})$ and Lemma \ref{rq1}. It is easy to check that $G\in \mathbb{G}_3(n,k,t)$, a contradiction.
\end{proof}

\vspace{0.5cm}

Theorem \ref{te} extends Theorem \ref{t4} from $k=2$ to general $k$. 

Let $ \mathbb{G}_4(n)=\{G_r\vee R(n-r,\frac{n-2}{2}-r)\mid 0\leq r\leq \frac{n-8}{2},G_r\in \mathcal{G}_r\}$ and $\mathbb{G}_5(n,r)=\{G_r\vee R(n-r,\frac{n-2}{2}-r)\mid G_r\in \mathcal{G}_r\}$ for $\frac{n-6}{2}\leq r\leq \frac{n-2}{2}$. In Theorems \ref{td} and \ref{te}, we obtain Corollary \ref{coro13} by taking $k=2$.
\begin{corollary}\label{coro13}
	Let $t\geq 1$ be an integer, $G$ be a $t$-connected graph of order $n\geq 4$. 
	
	\noindent{\rm (i)} For $t\in \{1,2\}$, if $\rho(\overline{G})\leq \sqrt{(n-1-t)(t+1)}$, then $G$ has a Hamilton path unless $G\in \mathbb{G}_1(2,t)$.
	
	\noindent{\rm (ii)} For $3\leq t\leq n-1$, if $\rho(\overline{G})\leq \sqrt{(n-1-t)(t+1)}$ and  $G\notin\mathbb{G}_2(2,t)$, then $G$ has a Hamilton path unless $G\in \mathbb{G}_1(2,t)$. 
	
	\noindent{\rm (iii)} If $n\geq 9$, $q(\overline{G})\leq n$ and $G\notin\mathbb{G}_4(n)$, then $G$ has a Hamilton path unless $G\in \mathbb{G}_5(n,\frac{n-2}{2})$.
\end{corollary}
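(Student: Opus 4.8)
The plan is to obtain all three parts by specializing Theorems~\ref{td} and~\ref{te} to $k=2$ and then rewriting the exceptional families in the form claimed. Throughout I use that a spanning $2$-tree is exactly a Hamilton path.

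Parts (i) and (ii) should follow by putting $k=2$ in Theorem~\ref{td}: the order bound $n\ge(k-2)t+4$ becomes $n\ge4$, the spectral bound $\rho(\overline{G})\le\sqrt{(n-1-t)(kt-t+1)}$ becomes $\rho(\overline{G})\le\sqrt{(n-1-t)(t+1)}$, and the exceptional sets $\mathbb{G}_1(k,t),\mathbb{G}_2(k,t)$ become $\mathbb{G}_1(2,t),\mathbb{G}_2(2,t)$. Item~(i) of Theorem~\ref{td} (for $t\in\{1,2\}$) and item~(ii) (for $3\le t\le n-1$, excluding $\mathbb{G}_2(2,t)$) then give statements (i) and (ii) verbatim, with nothing left to check.

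For (iii) I would apply Theorem~\ref{te} with $k=2$: as $(k-2)t=0$, the hypothesis reads $q(\overline{G})\le n$, and a Hamilton path is produced whenever $G\notin\mathbb{G}_3(n,2,t)$. The key point is that $\mathbb{G}_3(n,2,t)$ does not depend on $t$ and decomposes along the parameter $r$: writing a member as $G_r\vee R\bigl(n-r,\tfrac{n-2}{2}-r\bigr)$ with $0\le r\le\tfrac{n-2}{2}$ (so $n$ is even), the range $0\le r\le\tfrac{n-8}{2}$ is precisely $\mathbb{G}_4(n)$, and the remaining values $r\in\{\tfrac{n-6}{2},\tfrac{n-4}{2},\tfrac{n-2}{2}\}$ give $\mathbb{G}_5(n,\tfrac{n-6}{2})\cup\mathbb{G}_5(n,\tfrac{n-4}{2})\cup\mathbb{G}_5(n,\tfrac{n-2}{2})$. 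These pieces are disjoint because $r$ is recoverable from the graph as $n$ minus its number of minimum-degree vertices (the regular factor contributes exactly $n-r$ vertices of degree $\tfrac{n-2}{2}$, while every vertex of $G_r$ has larger degree). Since $G\notin\mathbb{G}_4(n)$ is assumed, only the three $\mathbb{G}_5$-families remain.

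The last step is to show that every member of $\mathbb{G}_5(n,\tfrac{n-6}{2})$ and of $\mathbb{G}_5(n,\tfrac{n-4}{2})$ is traceable, whereas members of $\mathbb{G}_5(n,\tfrac{n-2}{2})$ are not. A member of the last family is $G_{(n-2)/2}\vee\tfrac{n+2}{2}K_1$, which has an independent set of size $\tfrac{n+2}{2}>\tfrac n2$; since an $n$-vertex graph with $n$ even and a Hamilton path has independence number at most $\tfrac n2$, it has no Hamilton path and is a genuine exception. For the other two, the regular factor $R\bigl(n-r,\tfrac{n-2}{2}-r\bigr)$ is $2$-regular---a disjoint union of, say, $c$ cycles---when $r=\tfrac{n-6}{2}$, and $1$-regular---a matching of $c$ edges, nonempty only when $\tfrac{n+4}{2}$ is even---when $r=\tfrac{n-4}{2}$; in either case it is a disjoint union of $c$ traceable graphs on $b:=n-r$ vertices in total, and a routine threading argument (interleaving the vertices of $G_r$ between and inside these pieces) gives a Hamilton path of the join as soon as $c-1\le|V(G_r)|\le b+1$. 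Bounding $c\le\tfrac{n+6}{6}$ (resp.\ $c\le\tfrac{n+4}{4}$) and using $|V(G_r)|=\tfrac{n-6}{2}$ (resp.\ $\tfrac{n-4}{2}$), both inequalities hold for $n\ge9$. I expect this final verification---identifying exactly which $\mathbb{G}_5$-members are traceable and checking that the number of components of the regular factor never exceeds what $G_r$ can absorb---to be the only genuine obstacle; it is where the bound $n\ge9$ is used, and it needs a little parity bookkeeping (in particular the $1$-regular factor may fail to exist).
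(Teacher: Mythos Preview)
Your proposal is correct and follows the same route as the paper: specialize Theorems~\ref{td} and~\ref{te} to $k=2$, decompose $\mathbb{G}_3(n,2,t)$ as $\mathbb{G}_4(n)\cup\bigcup_{r}\mathbb{G}_5(n,r)$, and then verify traceability case by case. The only cosmetic difference is that for the $\mathbb{G}_5(n,\tfrac{n-6}{2})$ and $\mathbb{G}_5(n,\tfrac{n-4}{2})$ cases the paper first passes to the closure $C_{n-1}(G)=K_r\vee R(n-r,\tfrac{n-2}{2}-r)$ via Theorem~\ref{closure} (so the extra vertices form a clique and can be appended at one end, avoiding your upper bound $r\le b+1$), and for $\mathbb{G}_5(n,\tfrac{n-2}{2})$ it likewise argues non-traceability of the closure rather than via the independence number; both variants yield the same component-count inequality $c\le r+1$ and the same use of $n\ge 9$.
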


\begin{proof}
	By Theorem \ref{td}, we obtain (i) and (ii). Now we prove (iii) by Theorem \ref{te}. Clearly, $\mathbb{G}_3(n,2,t)=\mathbb{G}_4(n)\cup \mathbb{G}_5(n,\frac{n-2}{2})\cup \mathbb{G}_5(n,\frac{n-4}{2})\cup \mathbb{G}_5(n,\frac{n-6}{2})$. If $n$ is odd, then $G\notin \mathbb{G}_3(n,2,t)$, and thus $G$ has a Hamilton path by the definition of $\mathbb{G}_3(n,2,t)$ and the case $k=2$ in Theorem \ref{te}. Next we consider $n$ is even.
	
	If $G\notin \mathbb{G}_3(n,2,t)$, then $G$ has a Hamilton path by Theorem \ref{te}. 
	
	If $G\in \mathbb{G}_5(n,\frac{n-2}{2})$, then $ G=G_\frac{n-2}{2}\vee\frac{n+2}{2}K_1$($G_\frac{n-2}{2}\in \mathcal{G}_\frac{n-2}{2})$, and thus $C_{n-1}(G)=K_\frac{n-2}{2}\vee\frac{n+2}{2}K_1$. Since $C_{n-1}(G)$ has no Hamilton paths, $G$ has no Hamilton paths by the case $k=2$ in Theorem \ref{closure}. 
	
	If $G\in \mathbb{G}_5(n,\frac{n-6}{2})$, then $C_{n-1}(G)=K_{\frac{n-6}{2}}\vee R(\frac{n+6}{2},2)$. Since the number of components of $R\left(\frac{n+6}{2},2\right)$ is at most $\left\lfloor \frac{n+6}{6} \right\rfloor$ and does not exceed \( \frac{n-6}{2}+1 \), it follows that $C_{n-1}(G)$ has a Hamilton path. Then $G$ has a Hamilton path by the case $k=2$ in Theorem \ref{closure}.
	
	Similarly, if $G\in \mathbb{G}_5(n,\frac{n-4}{2})$, then $G$ has a Hamilton path.
\end{proof}

\begin{remark}
	In this section, we obtain several conclusions that improve or generalize some well-known results. Specifically:
	\begin{itemize}
		\item Theorem \ref{ta} (i) improves Theorem \ref{t1}.
		\item Theorem \ref{ta} generalizes Theorem \ref{t0} from \( t = 1 \) to general \( t \).
		\item In Theorem \ref{tc}, by setting \( k = 2 \) and \( t = 1 \), we obtain Corollary \ref{coro10}, which improves Theorem \ref{t3} for \( n \geq 16 \).
		\item Theorem \ref{te} extends Theorem \ref{t4} from \( k = 2 \) to general \( k \).
		\item In Theorems \ref{td} and \ref{te}, by taking \( k = 2 \), we derive Corollary \ref{coro13}, which is a spectral result related to the Hamilton path.
	\end{itemize}
\end{remark}

\section{Spanning $k$-ended-trees}\label{sec4}
\hspace{1.5em} In this section, we present some spectral conditions for the existence of spanning $k$-ended-trees in $t$-connected graphs, which improve the results of Ao et al. \cite{GYA1}. Now we recall some relevant conclusions. 

In \cite{HB}, Broersma and Tuinstra presented the following closure theorem for the existence of a spanning $k$-ended-tree in a connected graph.

\begin{theorem}{\rm(\!\!\cite{HB})}\label{closure2}
	Let $G$ be a connected graph of order $n$, $k$ be an integer with $2 \leq k \leq n -1$. Then $G$ has 
	a spanning $k$-ended-tree if and only if the $(n -1)$-closure $C_{n-1}(G)$ of $G$ has a spanning $k$-ended-tree.
\end{theorem}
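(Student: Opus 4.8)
The first, trivial, direction is that $G$ is a spanning subgraph of $C_{n-1}(G)$ on the same vertex set, so any spanning $k$-ended-tree of $G$ is also a spanning $k$-ended-tree of $C_{n-1}(G)$. For the converse I would argue along the chain by which the closure is built: write $G=G_0\subseteq G_1\subseteq\cdots\subseteq G_m=C_{n-1}(G)$, where $G_{i+1}=G_i+u_iv_i$ with $u_iv_i\notin E(G_i)$ and $d_{G_i}(u_i)+d_{G_i}(v_i)\ge n-1$. Then it suffices to prove the one-edge statement: \emph{if $u,v$ are nonadjacent vertices of a graph $H$ on $n$ vertices with $d_H(u)+d_H(v)\ge n-1$ and $H+uv$ has a spanning tree with at most $k$ leaves, then $H$ itself has such a spanning tree.} Applying this repeatedly to the pairs $(u_{m-1},v_{m-1}),\dots,(u_0,v_0)$ then pulls a spanning $k$-ended-tree of $C_{n-1}(G)$ all the way back to $G$. (The slick argument that handles $k=2$ --- adjoin a universal vertex and invoke the Hamilton-cycle closure of Bondy and Chv\'atal --- does not survive the passage to general $k$, so a direct argument seems necessary.)

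To prove the one-edge statement, suppose for contradiction that $H$ has no spanning tree with at most $k$ leaves while $H+uv$ does; among all spanning trees of $H+uv$ with at most $k$ leaves, pick one, $T$, having the fewest leaves, say $\ell\le k$ of them. If $uv\notin E(T)$ then $T$ is a spanning tree of $H$ with at most $k$ leaves, contrary to assumption, so $uv\in E(T)$. Deleting $uv$ splits $T$ into subtrees $T_u\ni u$ and $T_v\ni v$ with $|V(T_u)|+|V(T_v)|=n$. The degree hypothesis is used to find a crossing edge of $H$: since $N_H(u),N_H(v)\subseteq V(H)\setminus\{u,v\}$, if no edge of $H$ joined $V(T_u)$ to $V(T_v)$ we would get $d_H(u)+d_H(v)\le(|V(T_u)|-1)+(|V(T_v)|-1)=n-2$, a contradiction; and, pushing the same counting a little harder, one aims to insist that the crossing edge found is incident with a leaf of $T$ or with $u$ or $v$. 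Re-attaching $T_u$ and $T_v$ along such an edge produces a spanning tree $T'\subseteq H$, and the goal is to pick the edge so that $T'$ has at most $\ell\le k$ leaves, contradicting the choice of $H$.

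The heart of the matter --- and the step I expect to be the main obstacle --- is the leaf bookkeeping in this last move, since deleting $uv$ turns $u$ (resp. $v$) into a new leaf of the forest exactly when $d_T(u)=2$ (resp. $d_T(v)=2$). When $d_T(u),d_T(v)\ge 3$ every crossing edge works and $T'$ has at most $\ell$ leaves; when exactly one of $u,v$ has $T$-degree $2$ one reconnects at that vertex or at a leaf of $T$, and a short count closes the case. The genuinely delicate situation is $d_T(u)=d_T(v)=2$: here $T'$ must be obtained by joining two forest-leaves, and showing that a suitable edge of $H$ exists requires a careful count, weighing the numbers of non-leaves of $T$ lying in $T_u$ and in $T_v$ against the budget $d_H(u)+d_H(v)\ge n-1$; when that count does not close directly, one falls back on a local surgery that re-routes $T$ through the offending degree-$2$ vertex so as to drop the leaf count. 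Carrying out this case analysis in full --- possibly after strengthening the extremal choice of $T$, for instance by also minimizing $\min\{|V(T_u)|,|V(T_v)|\}$ over the relevant cut --- is where the bulk of the work lies, after which the induction along the closure chain completes the proof. The hypothesis $2\le k\le n-1$ is needed only to make ``spanning $k$-ended-tree'' a meaningful notion.
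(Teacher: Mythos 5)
The paper does not prove this statement at all --- it is quoted verbatim from Broersma and Tuinstra \cite{HB} --- so there is no in-paper proof to compare against; your attempt has to be judged against the original argument, whose overall shape (reduction along the closure chain to a one-edge lemma, followed by deleting $uv$ from a tree $T$ and re-attaching the two pieces $T_u$, $T_v$ across an edge of $H$) you have correctly reproduced. The easy parts are fine: the forward direction, the reduction to the one-edge statement, and the counting argument showing that some crossing edge of $H$ exists (indeed one incident with $u$ or with $v$, since $|N_H(u)\cap V(T_v)|+|N_H(v)\cap V(T_u)|\ge d_H(u)+d_H(v)-n+2\ge 1$).

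However, the proof is not complete, and the missing step is exactly the one you flag as ``the heart of the matter.'' The difficulty is genuine and not routine: if, say, $u$ has a neighbour $y\in V(T_v)$ and you form $T'=T-uv+uy$, then $d_{T'}(v)=d_T(v)-1$, so when $d_T(v)=2$ the vertex $v$ becomes a new leaf, and $T'$ has at most $\ell$ leaves only if the chosen endpoint $y$ was itself a leaf of $T$ (so that a leaf is simultaneously destroyed). In the worst case $d_T(u)=d_T(v)=2$ one needs a crossing edge of $H$ joining a leaf of $T_u$ to a leaf of $T_v$, and the hypothesis $d_H(u)+d_H(v)\ge n-1$ does not directly produce such an edge; one must either strengthen the extremal choice of $T$ or perform the ``local surgery'' (re-routing $T$ through a non-leaf neighbour of $u$ or $v$ so as to raise $d_T(u)$ or $d_T(v)$) that you only gesture at. Since you explicitly defer this case analysis rather than carry it out, the one-edge lemma --- and hence the theorem --- is not actually established by your write-up. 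To close the gap you should either complete that analysis or simply cite the lemma from \cite{HB}, where it is proved in full.
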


In \cite{GYA1}, Ao, Liu and Yuan proved the following spectral conditions to ensure the existence of a spanning $k$-ended-tree in connected graphs by Theorem \ref{closure2}.

\begin{theorem}{\rm(\!\!\cite{GYA1})}\label{t2}
	Let $k\geq 2$ be an integer, $G$ be a connected graph of order $n$.
	
	\noindent{\rm (i)} If $n \geq \max\{6k + 5, \frac3
	2k^2 + \frac32k + 2\}$ and $q(G) \geq q(K_1 \vee(K_{n-k-1} \cup kK_1))$, then $G$ has a spanning $k$-ended-tree unless $G\cong K_1 \vee(K_{n-k-1} \cup kK_1)$.
	
	\noindent{\rm (ii)}	If $\rho(\overline{G})\leq\sqrt{k(n-2)}$, then $G$ has a spanning $k$-ended-tree unless $G\cong K_{1,k+1}$.
\end{theorem}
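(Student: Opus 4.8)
The plan is to handle both parts by first passing, via the closure Theorem~\ref{closure2}, to $H:=C_{n-1}(G)$: assuming $G$ has no spanning $k$-ended-tree, $H$ is connected, $(n-1)$-closed, still has no spanning $k$-ended-tree, and is not $K_{n}$ (since $K_{n}$ has a Hamilton path).

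\emph{Part (i).} Here I would reuse the edge-counting scheme of the proof of Theorem~\ref{ta}(ii) with $t=1$. Set $H^{*}=K_{1}\vee(K_{n-k-1}\cup kK_{1})$, the graph appearing in Lemmas~\ref{edge2} and~\ref{k-ended tree} at $t=1$; by Lemma~\ref{k-ended tree}, $H^{*}$ has no spanning $k$-ended-tree. Since $K_{n-k}\cup kK_{1}$ is a proper spanning subgraph of $H^{*}$, Lemma~\ref{rq1} gives $q(H^{*})>q(K_{n-k}\cup kK_{1})=2(n-k-1)$. Feeding $q(G)\ge q(H^{*})$ into Lemma~\ref{q1} yields $2(n-k-1)<\tfrac{2e(G)}{n-1}+n-2$, i.e. $e(G)>\tfrac{(n-1)(n-2k)}{2}$. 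A short computation gives $\tfrac{(n-1)(n-2k)}{2}-\binom{n-k-1}{2}=n-\tfrac{k^{2}+k+2}{2}$, so the hypothesis $n\ge\tfrac32k^{2}+\tfrac32k+2$ makes $e(G)>\binom{n-k-1}{2}+k^{2}+k+1$, which is exactly the edge bound of Lemma~\ref{edge2} with $t=1$ (its order bound $n\ge\max\{6k+5,k^{2}+k+2\}$ being also implied). Then Lemma~\ref{edge2} forces $C_{n-1}(G)\cong H^{*}$; since $G$ is a spanning subgraph of $C_{n-1}(G)$, Lemma~\ref{rq1} together with $q(G)\ge q(H^{*})$ gives $G\cong H^{*}$, the stated exception.

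\emph{Part (ii).} With $H=C_{n-1}(G)$ as above, every $uv\in E(\overline{H})$ satisfies $d_{\overline{H}}(u)+d_{\overline{H}}(v)=2(n-1)-d_{H}(u)-d_{H}(v)\ge n$; as in the proofs of Theorems~\ref{td} and~\ref{te} this forces $\overline{H}$ to have exactly one non-trivial component $F$, so $\overline{H}=F\cup rK_{1}$, i.e. $H=\overline{F}\vee K_{r}$ with the $r$ vertices of $K_{r}$ universal in $H$; and $\delta(H)\ge\delta(G)\ge1$ gives $d_{F}(x)\le n-2$ for all $x\in V(F)$. The decisive step will be to upgrade the edge bound to $d_{F}(u)+d_{F}(v)\ge n+k-2$ for every $uv\in E(F)$ — equivalently, that $H$ has no non-adjacent pair $u,v$ with $d_{H}(u)+d_{H}(v)\ge n-k+1$. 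Granting this, $d_{F}\le n-2$ forces $d_{F}(u),d_{F}(v)\ge k$, and the downward quadratic $x\mapsto x(n+k-2-x)$ takes its minimum over $[k,n-2]$ at either endpoint (value $k(n-2)$), so $d_{F}(u)d_{F}(v)\ge k(n-2)$ on every edge of $F$. Since $\overline{H}$ is a spanning subgraph of $\overline{G}$ and $\rho(\overline{H})=\rho(F)$, Lemma~\ref{r3} applied to the connected graph $F$ then yields $\rho(\overline{G})\ge\rho(F)\ge\min_{uv\in E(F)}\sqrt{d_{F}(u)d_{F}(v)}\ge\sqrt{k(n-2)}$. Under the hypothesis $\rho(\overline{G})\le\sqrt{k(n-2)}$ these are all equalities, so Lemma~\ref{r3} makes $F$ regular or semi-regular; the semi-regular case is quickly excluded (it forces $F$ to be complete bipartite, and then $H=\overline{F}\vee K_{r}$ is disconnected, against $H$ connected), while the regular case gives $d:=\rho(F)=\sqrt{k(n-2)}$ with $2d\ge n+k-2$ and $d\le n-2$, which forces $d=n-2$, $n=k+2$, $F\cong K_{k+1}$, $r=1$, and $H\cong K_{1,k+1}$; as $K_{1,k+1}$ is a tree, $G=H\cong K_{1,k+1}$, the exception.

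\emph{Main obstacle.} Part (i) is essentially bookkeeping. The hard point is the upgrade in Part (ii): ruling out a non-adjacent pair of $H$ with $d_{H}$-sum $\ge n-k+1$ is an Ore-type closure assertion for spanning $k$-ended-trees, and I expect proving it to require a tree-surgery argument — take a spanning tree of $H+uv$ with at most $k$ leaves, cut it at the edge $uv$, and reconnect the two pieces using an edge through the universal clique $K_{r}$ without increasing the leaf count — or, more cheaply, invoking the sharp level-$(n-k+1)$ form of the Broersma--Tuinstra closure~\cite{HB} (and then starting the argument from $C_{n-k+1}(G)$ in place of $C_{n-1}(G)$). A minor additional check is needed for the semi-regular subcase of the equality analysis.
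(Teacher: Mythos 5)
Part (i) of your proposal is correct and is essentially the route the paper itself takes: the paper obtains Theorem \ref{t2}(i) as the case $t=1$ of Theorem \ref{tf}, and your computation ($q(H^{*})>q(K_{n-k}\cup kK_{1})=2(n-k-1)$, Lemma \ref{q1}, the edge count $e(G)>\binom{n-k-1}{2}+k^{2}+k+1$, then Lemma \ref{edge2} at $t=1$ and the spanning-subgraph argument via Lemma \ref{rq1}) reproduces that proof exactly, including the verification that $\frac32k^{2}+\frac32k+2\ge k^{2}+k+2$ so the order hypothesis of Lemma \ref{edge2} is met.

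Part (ii), however, contains a genuine gap, which you have correctly flagged but not closed. From $H=C_{n-1}(G)$ the closure only gives $d_{H}(u)+d_{H}(v)\le n-2$ for nonadjacent pairs, hence $d_{F}(u)+d_{F}(v)\ge n$ on edges of the nontrivial component $F$ of $\overline{H}$; together with $d_{F}\le n-2$ this yields only $d_{F}(u)d_{F}(v)\ge 2(n-2)$ and thus $\rho(\overline{G})\ge\sqrt{2(n-2)}$, which contradicts the hypothesis $\rho(\overline{G})\le\sqrt{k(n-2)}$ only when $k=2$. For $k\ge 3$ your argument stands or falls with the "upgrade" $d_{F}(u)+d_{F}(v)\ge n+k-2$ on every edge of $F$, and neither of your proposed fixes is available: the Broersma--Tuinstra closure cited in the paper (Theorem \ref{closure2}) is stated only at level $n-1$, and a level-$(n-k+1)$ closure is not a result you can invoke -- it would itself require proof (your "tree-surgery" sketch is not carried out). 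The other natural tool, the Ore-type theorem of Broersma--Tuinstra ($\sigma_{2}(G)\ge n-k+1$ forces a spanning $k$-ended tree), only produces \emph{one} nonadjacent pair with small degree sum, which is useless here because Lemma \ref{r3} takes the minimum of $\sqrt{d(u)d(v)}$ over \emph{all} edges. Note that the paper itself does not reprove (ii) in full strength either: its Corollary \ref{coro4.5} only recovers a version requiring $n\ge\max\{6k+5,k^{2}+k+2\}$ and with a weaker exceptional conclusion about $C_{n-1}(G)$. Your concluding equality analysis (AM--GM forcing $n=k+2$, $F\cong K_{k+1}$, $H\cong K_{1,k+1}$) is fine \emph{conditionally}, but the decisive inequality $d_{F}(u)d_{F}(v)\ge k(n-2)$ remains unproved, so part (ii) is incomplete as written.
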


Motivated by \cite{HB,GYA1}, we study some spectral conditions to ensure the existence of a spanning $k$-ended-tree in a $t$-connected graph. 

\begin{theorem}\label{tf}
	Let $k\geq 2$, $t\geq 1$ and $n\geq \max\{6k+6t-1,\frac 32(k+t-1)^2+\frac32k+\frac32t+\frac12\}$ be integers, $G^*=K_t\vee(K_{n-k-2t+1}\cup (k+t-1)K_1)$, $G$ be a $t$-connected graph of order $n$. If $q(G)\geq q(G^*)$, then $G$ has a spanning $k$-ended-tree unless $G\cong G^*$.
\end{theorem}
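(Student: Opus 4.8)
The plan is to mirror the argument used for Theorem~\ref{ta}(ii): assume $G$ has no spanning $k$-ended-tree and deduce $G\cong G^*$, while recalling that $G^*$ itself has no spanning $k$-ended-tree by Lemma~\ref{k-ended tree}, so the exceptional case is genuinely exceptional. The first step is to pin down a lower bound for $q(G^*)$. Since $V(K_t)$ together with $V(K_{n-k-2t+1})$ induces a clique on $n-k-t+1$ vertices inside $G^*$, and $t\geq 1$, $k+t-1\geq 2$, the graph $K_{n-k-t+1}\cup(k+t-1)K_1$ is a proper spanning subgraph of $G^*$; hence Lemma~\ref{rq1} gives
\begin{equation*}
q(G^*)>q\bigl(K_{n-k-t+1}\cup(k+t-1)K_1\bigr)=2(n-k-t).
\end{equation*}

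Next, because $G$ is $t$-connected it is connected, so Lemma~\ref{q1} applies, and combining it with the previous display and the hypothesis $q(G)\geq q(G^*)$ yields
\begin{equation*}
2(n-k-t)<q(G^*)\leq q(G)\leq\frac{2e(G)}{n-1}+n-2,
\end{equation*}
whence $e(G)>\tfrac12(n-1)(n-2k-2t+2)$. Writing $m=k+t$, a direct computation shows
\begin{equation*}
\frac{(n-1)(n-2m+2)}{2}-\binom{n-m}{2}-(m-1)^2-m=n-\frac{3m^2-3m+4}{2},
\end{equation*}
and one verifies $\tfrac{3m^2-3m+4}{2}=\tfrac32(k+t-1)^2+\tfrac32k+\tfrac32t+\tfrac12$, which is exactly the lower bound imposed on $n$. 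Therefore $e(G)>\binom{n-k-t}{2}+(k+t-1)^2+k+t$, so in particular $e(G)\geq\binom{n-k-t}{2}+(k+t-1)^2+k+t$. One also checks that the stated bound on $n$ dominates $\max\{6k+6t-1,\,k^2+kt+t+1\}$, so that Lemma~\ref{edge2} is applicable.

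Applying Lemma~\ref{edge2}, either $G$ has a spanning $k$-ended-tree, contradicting our assumption, or $C_{n-1}(G)\cong G^*$. In the latter case $G$ is a spanning subgraph of the connected graph $C_{n-1}(G)$, so Lemma~\ref{rq1} gives $q(G)\leq q(C_{n-1}(G))=q(G^*)$; together with $q(G)\geq q(G^*)$ this forces $q(G)=q(G^*)$, and then the strictness clause of Lemma~\ref{rq1} rules out $G$ being a proper subgraph of $C_{n-1}(G)$, so $G=C_{n-1}(G)\cong G^*$, as desired.

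I do not expect a genuine obstacle: the argument is templated once Lemmas~\ref{rq1}, \ref{q1}, \ref{edge2} and \ref{k-ended tree} are in hand. The only place requiring care is the arithmetic linking the edge estimate coming from Lemma~\ref{q1} to the threshold in Lemma~\ref{edge2}; the constant $\tfrac32(k+t-1)^2+\tfrac32k+\tfrac32t+\tfrac12$ in the statement is precisely what makes that inequality hold with no slack, so the computation must be carried out exactly, and one must additionally confirm that this threshold subsumes the auxiliary lower bounds on $n$ required by Lemma~\ref{edge2}.
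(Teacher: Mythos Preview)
Your proof is correct and follows essentially the same route as the paper's: bound $q(G^*)$ from below via the clique subgraph and Lemma~\ref{rq1}, convert the hypothesis into an edge count via Lemma~\ref{q1}, feed that into Lemma~\ref{edge2}, and finish with the strictness in Lemma~\ref{rq1} to force $G=C_{n-1}(G)\cong G^*$. If anything, your write-up is slightly more explicit than the paper's about the arithmetic linking the edge bound to the threshold in Lemma~\ref{edge2} and about why the stated lower bound on $n$ subsumes the hypotheses of that lemma.
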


\begin{proof}
Suppose to the contrary that $G$ has no spanning $k$-ended-trees. Now we show $G\cong G^*$, where $ G^*$ has no spanning $k$-ended-trees by Lemma \ref{k-ended tree}. 
	
	It is clear that we have $q(G^*)>q(K_{n-k-t+1}\cup (k+t-1)K_1))=2(n-k-t)$ by Lemma \ref{rq1}. By Lemma \ref{q1}, we obtain
	$$\begin{aligned}2(n-k-t)<q(G^*)\leq q(G)\leq \frac{2e(G)}{n-1}+n-2.\end{aligned}$$
	
	Then we have $
	e(G)>\frac{(n-1)(n-2k-2t+2)}{2}\geq\binom{n-k-t}{2}+(k+t-1)^2+k+t
	$ for $n\geq \frac 32(k+t-1)^2+\frac32k+\frac32t+\frac12$. Let $H=C_{n-1}(G)$. Then $H$ has no spanning $k$-ended-trees by Theorem \ref{closure2}. By $n\geq \max\{6k+6t-1,\frac 32(k+t-1)^2+\frac32k+\frac32t+\frac12\}\geq \max\{6k+6t-1,k^2+kt+t+1\}$ and Lemma \ref{edge2}, we have $H\cong G^*$. Thus $q(G)\leq q(H)=q(G^*)$ since $G$ is a spanning subgraph of $H$ and Lemma \ref{rq1}. Combining $q(G)\geq q(G^*)$, we have $G= H\cong G^*$. Then we complete the proof.
\end{proof}

\vspace{0.5cm}

In Theorem \ref{tf}, we can obtain (i) of Theorem \ref{t2} by setting $t=1$.

\begin{theorem}\label{tg}
	Let $k\geq 2$, $t\geq 1$ and $n\geq\max\{6k+6t-1,k^2+kt+t+1\}$ be integers, $G^*=K_t\vee(K_{n-k-2t+1}\cup (k+t-1)K_1)$, $G$ be a $t$-connected graph of order $n$. If $\rho(\overline{G})\leq \sqrt{(k+t)(n-2t)-\frac32k^2+\frac k2-kt+\frac12t^2+\frac12t-1}$, then $G$ has a spanning $k$-ended-tree unless $C_{n-1}(G)\cong G^*$.
\end{theorem}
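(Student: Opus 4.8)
The plan is to run the argument of Theorem~\ref{tc} in the ``$k$-ended-tree'' setting, replacing the $(n-(k-2)t-1)$-closure and Lemma~\ref{edge} by the $(n-1)$-closure (Theorem~\ref{closure2}) and Lemma~\ref{edge2}. Set $H=C_{n-1}(G)$ and argue by contradiction: assume $G$ has no spanning $k$-ended-tree. By Theorem~\ref{closure2}, $H$ has no spanning $k$-ended-tree either; in particular $H\ncong K_n$ (so $\overline H$ has at least one edge), and $H$ remains $t$-connected of order $n$ because forming a closure only adds edges. By the definition of the $(n-1)$-closure, $d_H(u)+d_H(v)\le n-2$ for every nonadjacent pair $u,v$ in $H$, hence for every $uv\in E(\overline H)$,
$$d_{\overline H}(u)+d_{\overline H}(v)=2(n-1)-d_H(u)-d_H(v)\ge n.$$

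Next I would convert this degree condition into an edge bound. Using $\sum_{v\in V(\overline H)}d_{\overline H}(v)^2=\sum_{uv\in E(\overline H)}\bigl(d_{\overline H}(u)+d_{\overline H}(v)\bigr)\ge n\,e(\overline H)$ together with Lemma~\ref{r2}, we get $n\rho(\overline H)^2\ge n\,e(\overline H)$, i.e.\ $e(\overline H)\le\rho(\overline H)^2$. Since $\overline H$ is a spanning subgraph of $\overline G$, Lemma~\ref{rq1} gives $\rho(\overline H)\le\rho(\overline G)\le\sqrt{f}$ with $f=(k+t)(n-2t)-\frac32k^2+\frac k2-kt+\frac12t^2+\frac12t-1$, so $e(\overline H)\le f$ and therefore
$$e(H)=\binom n2-e(\overline H)\ge\binom n2-f.$$
A direct computation, using $\binom n2-\binom{n-k-t}2=(k+t)n-\tfrac12(k+t)(k+t+1)$, shows that $\binom n2-f$ equals exactly $\binom{n-k-t}2+(k+t-1)^2+k+t$, the edge threshold in Lemma~\ref{edge2}. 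Since $n\ge\max\{6k+6t-1,\,k^2+kt+t+1\}$, Lemma~\ref{edge2} applied to $H$ forces $C_{n-1}(H)\cong G^*$; but $H$ is already $(n-1)$-closed, so $H=C_{n-1}(G)\cong G^*$, which is exactly the exceptional conclusion. This contradiction finishes the proof.

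The work here is mostly bookkeeping rather than conceptual: one has to choose $f$ so that $\binom n2-f$ lands precisely on the Lemma~\ref{edge2} threshold, and one has to check that the stated range of $n$ supplies both hypotheses needed by Lemma~\ref{edge2}. The latter is immediate, since $k^2+kt+t+1$ and $6k+6t-1$ are exactly those two hypotheses; so, unlike in Theorem~\ref{ta}(i) and Theorem~\ref{tf}, where the bound coming from Lemma~\ref{r1}/Lemma~\ref{q1} has slack that forces an extra quadratic-in-$(k,t)$ lower bound on $n$, here no additional constraint on $n$ is required. One should also remark that $f>0$ throughout this range, so the hypothesis $\rho(\overline G)\le\sqrt f$ is not vacuous.
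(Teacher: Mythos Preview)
Your proposal is correct and follows essentially the same argument as the paper: pass to $H=C_{n-1}(G)$, use the closure property to get $d_{\overline H}(u)+d_{\overline H}(v)\ge n$ on $E(\overline H)$, combine this with Lemma~\ref{r2} and $\rho(\overline H)\le\rho(\overline G)$ to bound $e(\overline H)$, and then invoke Lemma~\ref{edge2}. Your observation that $C_{n-1}(H)=H$ (so Lemma~\ref{edge2} yields $H\cong G^*$ directly) is a small clarification the paper leaves implicit; the only cosmetic issue is that reaching $H\cong G^*$ is the exceptional conclusion rather than a ``contradiction''.
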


\begin{proof}
	Let $H=C_{n-1}(G)$. Suppose to the contrary that $G$ has no spanning $k$-ended-trees, then $H$ has no spanning $k$-ended-trees by Theorem \ref{closure2}, and thus $H\ncong K_n$. Now we show $H\cong G^*$, where $G^*$ has no spanning $k$-ended-trees by Lemma \ref{k-ended tree}. 
	
	By the definition of $C_{n-1}(G)$, we have $d_H(u)+d_H(v)\leq n-2$ for every pair of nonadjacent vertices $u$ and $v$ of $H$. Then we have $$d_{\overline{H}}(u)+d_{\overline{H}}(v)= 2(n-1)-d_H(u)-d_H(v)\geq2(n-1)-(n-2)=n,$$ for any $uv\in E(\overline{H})$. Summing these inequalities for all $uv\in E(\overline{H})$, we obtain $$\sum_{v\in V(\overline{H})}d_{\overline{H}}(v)^2=\sum_{uv\in E(\overline{H})}(d_{\overline{H}}(u)+d_{\overline{H}}(v))\geq ne(\overline{H}).$$
	By Lemma \ref{r2}, we have $n\rho(\overline{H})^2\geq \sum\limits_{v\in V(\overline{H})}d_{\overline{H}}(v)^2 \geq ne(\overline{H}).$
	
	Since $\overline{H}$ is a spanning subgraph of $\overline{G}$ and Lemma \ref{rq1}, we have $$\rho(\overline{H})\leq\rho(\overline{G})\leq \sqrt{(k+t)(n-2t)-\frac32k^2+\frac k2-kt+\frac12t^2+\frac12t-1}.$$ Thus 
	$e(\overline{H})\leq\rho(\overline{H})^2\leq (k+t)(n-2t)-\frac32k^2+\frac k2-kt+\frac12t^2+\frac12t-1.$ 
	Therefore, we obtain $e(H)=\binom{n}{2}-e(\overline{H})\geq\binom{n-k-t}{2}+(k+t-1)^2+k+t$, and thus $H\cong G^*$ by Lemma \ref{edge2}.
\end{proof}

\vspace{0.5cm}

In Theorem \ref{tg}, by taking $t=1$, we obtain Corollary \ref{coro4.5}, which improves (ii) of Theorem \ref{t2} when $n\geq \max\{6k+5,\frac32k^2+\frac12k+2\}$.	

\begin{corollary}\label{coro4.5}
	Let $k\geq 2$ and $n\geq\max\{6k+5,k^2+k+2\}$ be integers, $G$ be a connected graph of order $n$. If $\rho(\overline{G})\leq \sqrt{k(n-2)+n-2-\frac32k^2-\frac12k}$, then $G$ has a spanning $k$-ended-tree unless $C_{n-1}(G)\cong K_1\vee(K_{n-k-1}\cup kK_1)$.
\end{corollary}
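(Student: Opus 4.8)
The plan is to specialize Theorem \ref{tg} to the case $t=1$ and then match the resulting threshold and radius bound to the statement of the corollary. First I would observe that when $t=1$, a $1$-connected graph is precisely a connected graph, so the hypothesis ``$G$ is a $t$-connected graph of order $n$'' becomes ``$G$ is a connected graph of order $n$'', exactly as in the corollary. Next I would substitute $t=1$ into the order condition $n\geq\max\{6k+6t-1,\,k^2+kt+t+1\}$ of Theorem \ref{tg}, which collapses to $n\geq\max\{6k+5,\,k^2+k+2\}$, matching the corollary's hypothesis verbatim.

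The second step is to check that the radius bound specializes correctly. Setting $t=1$ in the quantity under the square root in Theorem \ref{tg} gives
\[
(k+1)(n-2)-\tfrac32k^2+\tfrac k2-k+\tfrac12+\tfrac12-1=(k+1)(n-2)-\tfrac32k^2-\tfrac12k=k(n-2)+(n-2)-\tfrac32k^2-\tfrac12k,
\]
so the hypothesis $\rho(\overline{G})\leq\sqrt{(k+t)(n-2t)-\frac32k^2+\frac k2-kt+\frac12t^2+\frac12t-1}$ becomes $\rho(\overline{G})\leq\sqrt{k(n-2)+n-2-\frac32k^2-\frac12k}$, which is precisely the bound in the corollary. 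Finally, the conclusion ``$G$ has a spanning $k$-ended-tree unless $C_{n-1}(G)\cong G^*$'' with $G^*=K_t\vee(K_{n-k-2t+1}\cup(k+t-1)K_1)$ becomes, at $t=1$, ``$G$ has a spanning $k$-ended-tree unless $C_{n-1}(G)\cong K_1\vee(K_{n-k-1}\cup kK_1)$'', again matching the corollary.

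The argument is therefore a direct substitution, and there is essentially no obstacle beyond the routine algebraic verification that the constant terms collapse as claimed; the only point requiring a moment's care is confirming that the ``$\max$'' in the order hypothesis simplifies as stated (both branches $6k+6t-1\mapsto 6k+5$ and $k^2+kt+t+1\mapsto k^2+k+2$ are immediate) and that no extra exceptional graph is introduced when $t=1$. The claim that this improves part (ii) of Theorem \ref{t2} in the stated range follows by comparing $k(n-2)+n-2-\frac32k^2-\frac12k$ with $k(n-2)$: the difference is $n-2-\frac32k^2-\frac12k$, which is nonnegative exactly when $n\geq\frac32k^2+\frac12k+2$, so in the range $n\geq\max\{6k+5,\frac32k^2+\frac12k+2\}$ the bound of the corollary is weaker (hence the sufficient condition is satisfied by more graphs), giving the claimed improvement.
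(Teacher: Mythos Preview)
Your proposal is correct and follows precisely the paper's own approach: the corollary is obtained by direct substitution of $t=1$ into Theorem \ref{tg}, and the paper states exactly this without further detail. Your additional verification of the improvement over Theorem \ref{t2}(ii) is also correct and matches the range the paper claims.
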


\section*{Funding}
	\hspace{1.5em}This work is supported by the National Natural Science Foundation of China (Grant Nos. 12371347).

\vspace{0.5em}

\end{document}